\newtheorem{theorem}{Theorem}[section] 
\newtheorem{lemma}[theorem]{Lemma}     
\newtheorem{corollary}[theorem]{Corollary}
\newcommand{\abs}[1]{\left|#1\right|}
\newcommand{\set}[1]{\left\{#1\right\}} 
\newcommand{\IR}{\mathbb{R}} 
\newcommand{\conv}{\textrm{conv}} 
\title[Tverberg plus constraints]
 {Tverberg plus constraints} 
\author[P. V. M. Blagojevi\'c, F. Frick \& G. M. Ziegler]{Pavle V. M. Blagojevi\'c, Florian Frick and G\"unter M. Ziegler}
\begin{document} 
\maketitle

\begin{abstract}
    Many of the strengthenings and extensions of the topological Tverberg theorem 
    can be derived with surprising ease directly from the original theorem:
    For this we introduce a proof technique that combines a concept of ``Tverberg unavoidable subcomplexes'' 
    with the observation that Tverberg points that equalize  
    the distance from such a subcomplex can be obtained from maps to an extended
    target space.
    
    Thus we obtain simple proofs for many variants of the topological Tverberg theorem, 
    such as the colored Tverberg theorem of \v{Z}ivaljevi\'c \& Vre\'cica (1992).
    We also get a new strengthened version
    of the generalized van Kampen--Flores theorem by Sarkaria (1991) and Volovikov (1996),  
    an affine version of their ``$j$-wise disjoint'' Tverberg theorem,
    and a topological version of Sober\'on's (2013) result on Tverberg points with equal barycentric coordinates.
\end{abstract}

\section{Introduction}
Tverberg's 1966 theorem \cite{tverberg:generalisation_radon}, which states that any set of
$(r-1)(d+1)+1$ points in $\IR^d$ can be partitioned into $r$ subsets whose convex hulls intersect,
 is a seminal result that has inspired many interesting
variations, extensions, and analogues---starting with the ``topological Tverberg theorem'' 
of Bárány, Shlosman \& Sz\H{u}cs \cite{barany_schlosman_szucs:toplogical_tverberg} and \"Ozaydin~\cite{oezaydin:equivariant}. 
See Section \ref{sec:tverberg's theorem and its relatives} for a brief review
and Matou\v{s}ek \cite{matousek:borsuk-ulam2} for a friendly textbook treatment.

Here we propose a new, simple and elementary, proof technique that combines a concept of ``Tverberg unavoidable subcomplexes'' 
(which contain at least \emph{one} simplex from each Tverberg partition), see Section~\ref{sec:T-unavoidable-subcomplexes},
with the observation that Tverberg points that equalize  
the distance from such a subcomplex can be obtained from maps to an extended
target space, see Section~\ref{sec:constraining_function}. This technique allows us to derive many of the variations and analogues 
directly from the original topological Tverberg theorem with surprising ease. 
 
For example, our ansatz produces \emph{directly from the topological Tverberg theorem}
a colored version that is stronger than \v{Z}ivaljevi\'c \& Vre\'cica's 1992 ``colored Tverberg's theorem'' \cite{vzivaljevic1992colored}
(but weaker than the \emph{optimal} colored Tverberg theorem \cite{blagojevic2009optimal} from 2009);  
see Section~\ref{sec:colored}.
Similarly we obtain directly from the topological Tverberg theorem 
a strengthened version of the ``generalized van Kampen--Flores theorem'' of Sarkaria \cite{sarkaria:generalized_van-kampen_flores} 
and Volovikov \cite{volovikov:van-kampen_flores} from 1991/1996; see Section~\ref{sec:prescribing_dimensions}.  
As another example, our machinery reproves Sober\'on's 2013 
``Tverberg theorem with equal barycentric coordinates''
and at the same time produces a new topological version of this result; see Section~\ref{sec:Soberon}. 

Our machinery uses the topological Tverberg theorem as a \emph{black box}:
It provides the core for our proofs, but the proof technique relies only on the result, not on its proof.
Thus, for example, an extension of the topological Tverberg theorem to some $r$ that
are not prime powers would immediately yield the same extension for our derived results.
In place of the black box result we can also use the optimal colored Tverberg theorem; in Section~\ref{sec:optimal} we 
thus obtain further results of the ``colored Tverberg'' type.
The black box result can also be replaced by Tverberg's original theorem if the constraint functions are affine; in 
this case no prime power restriction on the number of parts is needed. 
Thus we obtain a new affine version of the topological Tverberg theorem for $j$-wise disjoint simplices; see Section~\ref{sec:j-wise}.
Similarly, from the Tverberg theorem for maps to manifolds by Volovikov \cite{volovikov1996topological}
one ``automatically'' gets extensions of our results for maps to manifolds.%

\section{A brief history of Tverberg type results}
\label{sec:tverberg's theorem and its relatives}

For every drawing of a tetrahedron in the plane either a vertex will end up on top of the opposite face or two opposite edges intersect in the drawing.  
This is a first instance of a Tverberg type result, which holds true in any dimension as was proved by
Johann Radon (1921):
{\em
\begin{compactitem}[\qquad]
\item {\sc Radon's theorem.} For any subset $X\subseteq \IR^d$ with (at least) $d+2$ elements there are disjoint subsets $S$ and $T$ of $X$ with the property that
$\conv(S)\cap\conv(T)\neq\emptyset$.
\end{compactitem}
}
\noindent
Radon's theorem has an equivalent reformulation in terms of affine maps of the
$(d+1)$-\allowbreak dimensional standard simplex $\Delta_{d+1}$ to $\IR^d$:
{\em
\begin{compactitem}[\qquad]
\item {\sc Affine Radon theorem.} 
For every affine map $f\colon\Delta_{d+1}\to\IR^d$ there are disjoint faces $\sigma$ and $\tau$ of $\Delta_{d+1}$ with the property that $f(\sigma)\cap f(\tau)\neq\emptyset$.
\end{compactitem}
}
\noindent
Now we can see many possible ways to extend this basic result.
First, we can ask whether the assumption on $f$ to be an affine map is essential.
Is it enough to assume that $f$ is only continuous?
This question was answered by  Bajm\'oczy \& B\'ar\'any \cite{bajmoczy1979} in 1979 via a clever use of Borsuk--Ulam theorem:
{\em
\begin{compactitem}[\qquad]
\item {\sc Topological Radon theorem.} For any continuous map  $f\colon\Delta_{d+1}\to\IR^d$ there are disjoint faces $\sigma$ and $\tau$ of the simplex $\Delta_{d+1}$ with the property that $f(\sigma)\cap f(\tau)\neq\emptyset$.
\end{compactitem}
}
\noindent
It is also natural to ask for more than two pairwise disjoint subsets of a sufficiently
large set of points $X\subset\IR^d$.
Such a result was first achieved by Birch \cite{Birch59} in 1959 for $d=2$, who also conjectured the
tight result for $d>2$, which was eventually proved by  
Helge Tverberg \cite{tverberg:generalisation_radon} in 1966.
In the equivalent affine version, his result reads as follows: 
{\em
\begin{compactitem}[\qquad]
\item {\sc Affine Tverberg theorem.} Let $d\geq 1$ and $r\geq 2$ be integers, and $N=(r-1)(d+1)$.
For any affine map $f\colon\Delta_N\to\IR^d$ there are $r$ pairwise disjoint faces 
$\sigma_1,\ldots,\sigma_r$ of $\Delta_N$ such that $f(\sigma_1)\cap\cdots\cap f(\sigma_r)\neq\emptyset$.
\end{compactitem}
}
\noindent
The set $\set{\sigma_1, \dots, \sigma_r}$ of disjoint faces 
of $\Delta_N$ whose images intersect is called a 
\emph{Tverberg partition} for $f:\Delta_N\rightarrow\IR^d$.
The dimension of the simplex $\Delta_N$ in the theorem is minimal.

Will the statement of Tverberg's theorem still be true if the map $f$ is only assumed to be continuous?
Surprisingly, the available answers to this question depend on divisibility properties of
the parameter~$r$.
B\'ar\'any, Shlosman \& Sz\H ucs \cite{barany_schlosman_szucs:toplogical_tverberg} in 1981
formulated the corresponding ``topological Tverberg theorem'' 
for continuous maps and proved it for the case $r$ is a prime number.
This was extended to all prime powers $r$ by \"Ozaydin \cite{oezaydin:equivariant} in 1987:
{\em
\begin{compactitem}[\qquad]
\item  {\sc Topological Tverberg theorem.} Let $r\ge2$, $d\ge1$, and $N=(r-1)(d+1)$. 
If $r$ is a prime power, then for every continuous map $f : \Delta_N \to \IR^d$ there are $r$ pairwise disjoint faces $\sigma_1, \dots, \sigma_r$ of $\Delta_N$ such that $f(\sigma_1) \cap \dots \cap f(\sigma_r) \neq \emptyset$.
\end{compactitem}
}
\noindent
The topological Tverberg theorem is derived from the nonexistence of an 
$\mathfrak{S}_r$-equivariant map from the join
$[r]^{*(N+1)}$ into the sphere $S(W_r^{\oplus (d+1)})$ when $r$ is a prime power.
Here $[r]=\{1,\ldots,r\}$ denotes a $0$-dimension simplicial complex of $r$ distinct points with the obvious action of the symmetric group $\mathfrak{S}_r$,  and $W_r:=\{(x_1,\ldots,x_r):x_1+\cdots+x_r=0\}$ is an
 $\mathfrak{S}_r$-representation with an action given by permutation of coordinates.
The fact that an \linebreak$\mathfrak{S}_r$-equivariant map $[r]^{*(N+1)}\to S(W_r^{\oplus (d+1)})$ exists if
$r$ is not a prime power, established in~\cite{Z124},  makes the extension of topological Tverberg theorem to non prime powers into ``one of the most challenging problems in this field'' \cite[Notes to Sect.~6.4]{matousek:borsuk-ulam2}.  

What is next? 
Can we say something about dimensions of simplexes in a Tverberg partition?
What is the minimal dimension of a simplex $\Delta$ 
such that for every mapping to the plane there are two disjoint edges whose images intersect?  
(The answer to this is given by Kuratowski's theorem from graph theory: $K_5$ is not planar, 
so the $4$-simplex will do.)
How about higher-dimensional versions of this result?
The classical van Kampen--Flores theorem \cite{flores:selbstverschlungen} \cite{van1933komplexe} 
from the 1930s provides a first answer to such questions.
{\em
\begin{compactitem}[\qquad]
\item  {\sc van Kampen--Flores theorem.} For $d\ge2$ even, and a continuous map $f : \Delta_{d+2} \to \IR^d$ there are two disjoint faces $\sigma_1,\sigma_2$ of $\Delta_{d+2}$ of dimension at most $ \frac{d}{2} $ in $\Delta_{d+2}$ such that $f(\sigma_1) \cap f(\sigma_2) \neq \emptyset$.
\end{compactitem}
}
\noindent
The generalized van Kampen--Flores theorems of Sarkaria \cite{sarkaria:generalized_van-kampen_flores} and Volovikov \cite{volovikov:van-kampen_flores} extend this by providing conditions that guarantee 
the existence of multiple overlaps. 
These theorems are classically obtained via topological methods that are considerably more involved
than those needed for to the proof of topological Tverberg theorem in cases of primes or
of prime powers.

Can we put any other restrictions on a Tverberg partition?
B\'ar\'any, F\"uredi \& Lov\'asz \cite{BFL} in 1989 realized a need for 
``a colored version of Tverberg's theorem.''
They proved the first instance of a such a result for three triangles
in the plane. 
Extending these ideas, B\'{a}r\'{a}ny \& Larman
\cite{barany1992colored} in 1992 formulated the following general problem.
{\em
\begin{compactitem}[\qquad]
\item  {\sc The colored Tverberg problem.} Determine the smallest natural number $n=n(d,r)$ such that for every collection
$\mathcal{C} =C_{0}\uplus \dots\uplus C_{d}$ of $n$ points in $\IR^d$, where each ``color class'' $C_i$
satisfies $|C_i|\ge r$, there are $r$
disjoint subcollections $F_{1},\dots,F_{r}$ of~$\mathcal{C}$ such that
\begin{compactenum}[\rm \quad(A)]
\item $|C_{i}\cap F_{j}|\le 1$ for every $i\in\{0,\dots,d\},\,j\in \{1,\dots,r\}$, and
\item $\mathrm{conv\,}(F_1)\cap\dots\cap\mathrm{conv\,}(F_r)\neq\emptyset$
\end{compactenum}
\end{compactitem}
}
\noindent
They proved that $n(1,r)=2r$, $n(2,r)=3r$, presented a proof by Lov\'asz for $n(d,2)=2(d+1)$, and conjectured that $n(d,r)=r(d+1)$.
In the same year \v{Z}ivaljevi\'c \& Vre\'cica \cite{vzivaljevic1992colored} formulated the following modified colored Tverberg problem:
{\em
\begin{compactitem}[\qquad]
\item  {\sc Modified colored Tverberg problem.}
Determine the smallest number $t=tt(d,r)$ such that for every
simplex $\Delta$ with $(d+1)$-colored vertex set
$\mathcal{C} =C_{0}\uplus \dots\uplus C_{d}$,  with $|C_{i}|\ge t$ for all~$i$,
and for every continuous map $f:\Delta\rightarrow\mathbb{R}^d$,
there are $r$ disjoint faces $\sigma_{1},\dots,\sigma_{r}$ of $\Delta$ satisfying
\begin{compactenum}[\rm \quad(A)]
\item $|C_{i}\cap \sigma_{j}|\le 1$ for every $i\in \{0,\dots,d\},\,j\in\{1,\dots,r\}$, and
\item $f(\sigma_1)\cap\dots\cap f(\sigma_r)\neq\emptyset$.
\end{compactenum}
\end{compactitem}
}
\noindent
\v{Z}ivaljevi\'c and Vre\'cica obtained the upper bound for the function $tt(d,r)\leq 2r-1$ in the case when $r$ is a prime that yield upper bound $tt(d,r)\leq 4r-3$ for any $r$.
 (\v{Z}ivaljevi\'c later extended this to prime powers $r$.)
These results were obtained by proving the nonexistence of  an $\mathfrak{S}_r$-equivariant map from the join
$\Delta_{r,2r-1}^{*(d+1)}$ into the sphere $S(W_r^{\oplus (d+1)})$ when $r$ is a prime power.
Here $\Delta_{r,2r-1}$ denotes the $r\times (2r-1)$ chessboard complex. 
The symmetric group  $\mathfrak{S}_r$ acts on $\Delta_{r,2r-1}$ by permuting the rows.  
The connectivity of the chessboard $\Delta_{r,2r-1}$ plays the central role in the proof of the nonexistence of an $\mathfrak{S}_r$-equivariant map $\Delta_{r,2r-1}^{*(d+1)}\to S(W_r^{\oplus (d+1)})$.
Note, however, that the upper bound of \v Zivaljevi\'c and Vre\'cica on the function $tt(d,r)$ does not provide any information about the function $n(d,r)$ of B\'{a}r\'{a}ny and Larman.

Only recently Blagojevi\'c,  Matschke \& Ziegler \cite{blagojevic2009optimal}, as a consequence of their ``optimal colored Tverberg theorem'', established
 that $n(d,r)=r(d+1)$ when $r+1$ is a prime and $n(d,r)\leq 2(r-1)(d+1)+1$ when $r$ is a prime.
Furthermore, they established $tt(d,r)\leq 2r-2$ for any~$r$.
See \cite{Z123e} for an exposition.
 
Can we ask for even more? What about preimages of the intersection point of a Tverberg partition? Can we say something about the barycentric coordinates of these preimages? This question was addressed by Sober\'on \cite{soberon:equal_coefficients} in 2013.
This result is discussed in more detail in Section \ref{sec:Soberon}.

\section{The topological Tverberg theorem with a constraining function}\label{sec:constraining_function}
 
The original version of Tverberg's theorem \cite{tverberg:generalisation_radon} asserts that any $(r-1)(d+1)+1$ points in $\IR^d$ can be partitioned into $r$ sets whose convex hulls intersect. 
As discussed, this can be phrased in terms of affine maps: Set $N := (r-1)(d+1)$ and denote by $\Delta_N$ the $N$-dimensional simplex;
then Tverberg's theorem says that for every affine map $f : \Delta_N \to \IR^d$ there are $r$ pairwise disjoint faces 
$\sigma_1, \dots, \sigma_r$ of~$\Delta_N$ such that $f(\sigma_1) \cap \dots \cap f(\sigma_r) \neq \emptyset$. 
The set of faces $\set{\sigma_1, \dots, \sigma_r}$ is called a \emph{Tverberg $r$-partition for $f$}, 
or simply a \emph{Tverberg partition} if it is clear which $f$ and~$r$ we refer to. 
Points $x_i \in \sigma_i$ for $i = 1, \dots, r$ with $f(x_1) = \dots = f(x_r)$ are \emph{points of Tverberg coincidence for $f$}. A first extension of this theorem to continuous maps was proved 
by Bárány, Shlosman \& Sz\H{u}cs \cite{barany_schlosman_szucs:toplogical_tverberg} for a prime number $r$ of intersecting faces. 
This was later generalized to prime powers $r$ by \"Ozaydin \cite{oezaydin:equivariant}. 

\begin{theorem} [(Topological Tverberg: Bárány, Shlosman \& Sz\H{u}cs \cite{barany_schlosman_szucs:toplogical_tverberg}, \"Ozaydin \cite{oezaydin:equivariant})]\label{thm:top_tverberg_thm}
  Let $r\ge2$ be a prime power, $d\ge1$, and $N \ge (r-1)(d+1)$. Then for every continuous map $f : \Delta_N \to \IR^d$ there are $r$ pairwise disjoint faces $\sigma_1, \dots, \sigma_r$ of $\Delta_N$ such that $f(\sigma_1) \cap \dots \cap f(\sigma_r) \neq \emptyset$.
\end{theorem}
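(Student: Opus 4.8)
The plan is to prove Theorem~\ref{thm:top_tverberg_thm} by the configuration space/test map scheme: reduce it to the nonexistence of an $\mathfrak{S}_r$-equivariant map $[r]^{*(N+1)}\to S(W_r^{\oplus(d+1)})$ when $r$ is a prime power, and then establish that nonexistence --- first for primes, then for prime powers. It suffices to treat $N=(r-1)(d+1)$, since for larger $N$ one restricts $f$ to any $(r-1)(d+1)$-dimensional face of $\Delta_N$ and a Tverberg partition for the restriction consists of faces of $\Delta_N$, settling the general case.

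So assume $N=(r-1)(d+1)$ and, for contradiction, that $f\colon\Delta_N\to\IR^d$ has no Tverberg partition. Let $(\Delta_N)^{*r}_\Delta$ be the $r$-fold deleted join, the subcomplex of the $r$-fold join spanned by tuples of pairwise disjoint faces; there is a canonical $\mathfrak{S}_r$-equivariant homeomorphism $(\Delta_N)^{*r}_\Delta\cong[r]^{*(N+1)}$, and $[r]^{*(N+1)}$ is a wedge of $N$-spheres, hence $(N-1)$-connected. Writing a point of the deleted join as $t_1y_1\oplus\dots\oplus t_ry_r$ with $y_i\in\Delta_N$, $t_i\ge 0$, $\sum_i t_i=1$, and pairwise disjoint carriers among the indices with $t_i>0$, set
\[
  \Phi(t_1y_1\oplus\dots\oplus t_ry_r)\ :=\ \bigl((t_1,\dots,t_r),\,(t_1f(y_1),\dots,t_rf(y_r))\bigr)\ \in\ \IR^r\oplus(\IR^d)^r,
\]
and let $\pi$ be the orthogonal projection onto $W_r\oplus W_r^{\oplus d}=W_r^{\oplus(d+1)}$, which subtracts the average within each block of $r$ coordinates. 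Both $\Phi$ and $\pi$ are $\mathfrak{S}_r$-equivariant, and $\pi\Phi(p)=0$ precisely when $t_1=\dots=t_r=\tfrac1r$ and $f(y_1)=\dots=f(y_r)$; in that case all carriers are used and pairwise disjoint, so $p$ would be a point of Tverberg coincidence --- excluded by assumption. Hence $\pi\Phi$ is nowhere zero, and $p\mapsto\pi\Phi(p)/\|\pi\Phi(p)\|$ is an $\mathfrak{S}_r$-equivariant map $[r]^{*(N+1)}\to S(W_r^{\oplus(d+1)})$, whose target is a sphere of dimension $(r-1)(d+1)-1=N-1$.

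It remains to rule out this map when $r$ is a prime power. For $r$ prime, restrict the action to the cyclic subgroup $\mathbb{Z}/r$ generated by an $r$-cycle: it acts freely on $[r]$, hence freely on $[r]^{*(N+1)}$, and freely on $S(W_r^{\oplus(d+1)})$ since $W_r$ has no nonzero $\mathbb{Z}/r$-fixed vector. Dold's theorem --- there is no equivariant map from an $n$-connected free $G$-space to a free $G$-CW-complex of dimension $\le n$, equivalently the monotonicity of the $\mathbb{Z}/r$-index --- applied with $n=N-1$ yields the contradiction. For $r=p^k$, use instead the elementary abelian subgroup $G=(\mathbb{Z}/p)^k$, identified with $[r]$ so that it acts freely on $[r]$ and hence on $[r]^{*(N+1)}$; now the action on $S(W_r^{\oplus(d+1)})$ need not be free, so one computes the primary obstruction to an equivariant map directly, as \"Ozaydin does --- via a Fadell--Husseini index computation or a skeleton-by-skeleton argument in $H^*_G(-;\mathbb{F}_p)$ --- and shows that the pertinent power of the Euler class of the vector bundle associated with $W_r^{\oplus(d+1)}$ is nonzero mod $p$, which forbids the map.

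The deleted-join identification and the check that $\pi\Phi$ detects exactly Tverberg coincidences are routine; the substantive step, and the one where the prime-power hypothesis is genuinely needed, is the prime-power case of the equivariant nonexistence. There the target action is not free, Dold's theorem no longer applies, and one must carry out an honest Borel-cohomology computation showing that a certain Euler-class obstruction survives modulo $p$ --- precisely the obstruction that \emph{vanishes} for general $r$, where (as recalled in the introduction) the equivariant map does exist. Since in this paper the topological Tverberg theorem is used only as a black box, for this last step we invoke \"Ozaydin's result rather than reproduce its proof.
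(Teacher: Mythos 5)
Your proposal is correct, and it matches the paper's treatment: the paper does not prove this theorem at all but cites it as a black box, sketching in Section~2 exactly the reduction you spell out (nonexistence of an $\mathfrak{S}_r$-equivariant map $[r]^{*(N+1)}\to S(W_r^{\oplus(d+1)})$ for prime powers $r$). Your configuration-space/test-map reduction, the Dold argument for primes, and the explicit deferral of the prime-power equivariant nonexistence to \"Ozaydin are all sound and consistent with how the paper uses the result.
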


We will consider an additional constraining function on $\Delta_N$, and ask that the points of coincidence of~$f$ lie in one fiber of this function. The proof of the following lemma shows that the topological Tverberg theorem itself yields that this can be achieved if we are are willing to increase the size of the original set of points in~$\IR^d$; for every continuous constraint we 
must provide $r-1$ additional points in order to obtain that there is a Tverberg partition equalizing the constraint.  

\begin{lemma}[(Key lemma \#1)] \label{tverberg_lemma}%
  Let $r\ge2$ be a prime power, $d\ge1$, and $c\ge0$. Let $N\ge N_c := (r-1)(d+1+c)$ and let $f : \Delta_N \to \IR^d$ and $g : \Delta_N \to \IR^c$ be continuous. Then there are $r$ points $x_i \in \sigma_i$, where $\sigma_1, \dots, \sigma_r$ are pairwise disjoint faces of~$\Delta_N$ with $g(x_1) = \dots = g(x_r)$ and $f(x_1) = \dots = f(x_r)$.  
\end{lemma}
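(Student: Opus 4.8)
The plan is to deduce the lemma directly from the topological Tverberg theorem (Theorem~\ref{thm:top_tverberg_thm}) by absorbing the constraint $g$ into the target space. Concretely, I would form the single continuous map
\[
h := (f,g)\colon \Delta_N \longrightarrow \IR^d \times \IR^c \;\cong\; \IR^{d+c},
\]
and apply the topological Tverberg theorem to $h$ in place of $f$, with $d$ replaced by $d+c$.

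The only point to verify is that the dimension count matches: the topological Tverberg theorem for a continuous map into $\IR^{d+c}$ requires $N \ge (r-1)\bigl((d+c)+1\bigr) = (r-1)(d+1+c) = N_c$, which is exactly the hypothesis (and for $c=0$ this just recovers Theorem~\ref{thm:top_tverberg_thm} via $\IR^d\times\IR^0\cong\IR^d$). Since $r$ is a prime power and $N \ge N_c$, Theorem~\ref{thm:top_tverberg_thm} produces $r$ pairwise disjoint faces $\sigma_1,\dots,\sigma_r$ of $\Delta_N$ with $h(\sigma_1)\cap\dots\cap h(\sigma_r)\neq\emptyset$. I would then pick a point $z$ in this intersection and write $z = (z',z'')$ with $z'\in\IR^d$ and $z''\in\IR^c$. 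For each $i$, since $z\in h(\sigma_i)$, choose $x_i\in\sigma_i$ with $h(x_i)=z$, i.e.\ $f(x_i)=z'$ and $g(x_i)=z''$. Then $f(x_1)=\dots=f(x_r)=z'$ and $g(x_1)=\dots=g(x_r)=z''$, which is precisely the assertion.

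There is no genuine obstacle here: once one thinks of treating $(f,g)$ as a map into the enlarged target $\IR^{d+c}$, the statement is immediate from the black-box theorem. The content of the lemma — and the reason it is worth isolating as ``Key lemma \#1'' — is exactly this reformulation: it records that one can force the Tverberg coincidence points of $f$ into a prescribed fiber of an arbitrary continuous $\IR^c$-valued constraint, at the cost of precisely $(r-1)c$ additional vertices of the simplex. The real work in the rest of the paper lies not in this lemma but in choosing the constraint $g$ cleverly (so that equalizing $g$ forces the faces $\sigma_i$ to avoid a Tverberg-unavoidable subcomplex, to have bounded dimension, to respect a coloring, to control barycentric coordinates, etc.).
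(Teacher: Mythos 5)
Your proof is correct and is exactly the paper's argument: the paper also applies the topological Tverberg theorem to the combined map $x \mapsto (f(x),g(x))$ into $\IR^{d+c}$, noting that $N \ge N_c = (r-1)(d+1+c)$ is precisely the required bound. You have merely spelled out the final step of unpacking a common point of the images into the coincidence points $x_i$, which the paper leaves implicit.
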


\begin{proof} 
  Apply the topological Tverberg theorem  
  to the continuous  
  map $\Delta_N \to \IR^{d+c}$ given by $x \mapsto (f(x), g(x))$. 
\end{proof}

In this lemma, $c$ is the number of additional constraints; 
thus the special case $c=0$ is the topological Tverberg theorem.  
Suitable choices of constraining functions will enable us to obtain from this 
the existence of various kinds of special Tverberg partitions.

\begin{remark} \label{remark:affine}
  Note that Lemma \ref{tverberg_lemma} remains true for $r$ an arbitrary positive integer in the setting 
  where the map~$f$ as well as the constraint function $g$ are affine.
\end{remark} 

\section{Tverberg unavoidable subcomplexes}\label{sec:T-unavoidable-subcomplexes}

\begin{definition}[(Tverberg unavoidable subcomplexes)]
Let $r \ge 2$, $d \ge 1, N \ge r-1$ be integers and 
$f : \Delta_N \to \IR^d$ a continuous map with at least one Tverberg $r$-partition.
Then a subcomplex $\Sigma \subseteq \Delta_N$ is \emph{Tverberg unavoidable} 
if for every Tverberg partition $\set{\sigma_1, \dots, \sigma_r}$ 
for~$f$ there is at least one face $\sigma_j$ that lies in~$\Sigma$. 
\end{definition}

According to this definition, whether a subcomplex is Tverberg unavoidable depends on the parameters $r$, $d$, and~$N$, 
but also on the map $f$. 
However, we will only be interested in subcomplexes that are large enough to be unavoidable for \emph{any} map~$f$. 

\begin{lemma}[(Key examples)] \label{example:unavoidable} %
    Let $d\ge1$, $r\ge2$, and $N\ge r-1$, and assume that the continuous map $f : \Delta_{N} \to \IR^d$
    has a Tverberg $r$-partition. 
 \begin{compactenum}[\rm(i)]
  \item The induced subcomplex (simplex) $\Delta_{N-(r-1)}$ on $N-r+2$ vertices of~$\Delta_N$ is Tverberg unavoidable.\label{example:original_simplex}%

 \item For any set $S$ of at most $2r-1$ vertices in $\Delta_N$ the subcomplex of faces with at most one vertex in $S$ is Tverberg unavoidable.\label{example:colored}%
 
  \item If $k$ is an integer such that $r(k+2)> N+1$, then the $k$-skeleton $\Delta_N^{(k)}$ of $\Delta_N$ is Tverberg unavoidable.\label{example:bounded_dimensions}%
 
  \item If $k\ge0$ and $s$ are integers such that $r(k+1)+s> N+1$ with $0\le s \le r$, then the
     subcomplex $\Delta_N^{(k-1)}\cup\Delta_{N-(r-s)}^{(k)}$ of~$\Delta_N$ is Tverberg unavoidable.\label{example:bounded_dimensions_non_uniform}%
  \end{compactenum}
\end{lemma}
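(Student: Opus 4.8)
The plan is to treat each of the four statements by a counting argument on the vertices used by a Tverberg partition, exploiting the fact that a Tverberg $r$-partition $\set{\sigma_1,\dots,\sigma_r}$ consists of \emph{pairwise disjoint} faces of $\Delta_N$, so that $\sum_{j=1}^r (\dim\sigma_j + 1) = \sum_{j=1}^r |\,\mathrm{vert}(\sigma_j)| \le N+1$. The strategy in every case is the same: assume for contradiction that \emph{no} face $\sigma_j$ lies in the prescribed subcomplex $\Sigma$, translate ``$\sigma_j\notin\Sigma$'' into a lower bound on $|\mathrm{vert}(\sigma_j)|$ (or on how many vertices $\sigma_j$ uses from a distinguished vertex set), sum over $j$, and derive $\sum_j|\mathrm{vert}(\sigma_j)| > N+1$, contradicting disjointness.

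For (\ref{example:original_simplex}): the subcomplex is the induced simplex on a fixed set $V$ of $N-r+2$ vertices, i.e. the complement vertex set $\mathrm{vert}(\Delta_N)\setminus V$ has only $r-1$ vertices. If no $\sigma_j$ lies inside the induced simplex on $V$, then each $\sigma_j$ uses at least one vertex outside $V$; since the $\sigma_j$ are pairwise disjoint, this requires $r$ distinct vertices outside $V$, but only $r-1$ are available — contradiction. For (\ref{example:colored}): the subcomplex is $\set{\sigma : |\sigma\cap S|\le 1}$ for a set $S$ with $|S|\le 2r-1$. If every $\sigma_j$ fails to lie in this subcomplex, then $|\sigma_j\cap S|\ge 2$ for all $j$; by pairwise disjointness $|S|\ge\sum_j|\sigma_j\cap S|\ge 2r>2r-1$ — contradiction. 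For (\ref{example:bounded_dimensions}): the subcomplex is the $k$-skeleton $\Delta_N^{(k)}$. If no $\sigma_j$ lies in it, then $\dim\sigma_j\ge k+1$, so $|\mathrm{vert}(\sigma_j)|\ge k+2$ for each $j$, hence $N+1\ge\sum_j|\mathrm{vert}(\sigma_j)|\ge r(k+2)>N+1$ — contradiction.

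The only case requiring a genuinely bookkeeping-flavored argument, and the one I expect to be the main obstacle, is (\ref{example:bounded_dimensions_non_uniform}), where the subcomplex $\Delta_N^{(k-1)}\cup\Delta_{N-(r-s)}^{(k)}$ is ``non-uniform'': a face is allowed to have dimension up to $k-1$ freely, or dimension exactly $k$ provided all its vertices lie in the distinguished set $W$ of $N-(r-s)+1 = N+1-(r-s)$ vertices (so $W^c$ has $r-s$ vertices). Assuming no $\sigma_j$ lies in this subcomplex, each $\sigma_j$ either has $\dim\sigma_j\ge k+1$ (hence $|\mathrm{vert}(\sigma_j)|\ge k+2$), or has $\dim\sigma_j = k$ but uses at least one vertex of $W^c$ (hence $|\mathrm{vert}(\sigma_j)| = k+1$ and $|\sigma_j\cap W^c|\ge 1$); in the first sub-case one can also record $|\sigma_j\cap W^c|\ge 0$ but $|\mathrm{vert}(\sigma_j)|\ge k+2$. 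Partition the index set $\{1,\dots,r\}$ into $A$ (faces of dimension $\ge k+1$) and $B$ (faces of dimension exactly $k$, each hitting $W^c$), with $|A|=a$, $|B|=b$, $a+b=r$. Counting total vertices: $N+1 \ge \sum_j|\mathrm{vert}(\sigma_j)| \ge a(k+2)+b(k+1) = r(k+1)+a$. Counting vertices in $W^c$: $r-s = |W^c|\ge\sum_{j\in B}|\sigma_j\cap W^c|\ge b = r-a$, so $a\ge s$. Combining, $N+1\ge r(k+1)+a\ge r(k+1)+s > N+1$ by hypothesis — contradiction. The delicate point here is only to make sure the dichotomy on each $\sigma_j$ is exhaustive (a face not in the subcomplex is neither of dimension $\le k-1$ nor a $k$-face contained in $W$, so it must be as described) and that the two inequalities $a\ge s$ and $N+1\ge r(k+1)+a$ are combined in the right direction; the arithmetic itself is immediate. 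Note that taking $s=r$ recovers (\ref{example:bounded_dimensions}) and $s=0$ collapses to the $(k-1)$-skeleton case, which is a useful consistency check.
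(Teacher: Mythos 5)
Your proposal is correct and follows essentially the same pigeonhole/vertex-counting argument as the paper, including for part~(iv), where your explicit split into faces of dimension $\ge k+1$ and $k$-faces meeting the $r-s$ excluded vertices is just a slightly more formal rendering of the paper's ``only $r-s$ of them can involve one of the last $r-s$ vertices, so $s$ of them must have dimension at least $k+1$, requiring $r(k+1)+s$ vertices.'' No gaps; the dichotomy you verify in (iv) is indeed the only delicate point, and you handle it correctly.
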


\begin{proof} All these are easy consequences of the pigeonhole principle:
    \begin{compactenum}[(i)]
        \item The simplex $\Delta_{N-(r-1)}$ contains all but $r-1$ of the vertices of $\Delta_N$, so
        for any Tverberg partition $\sigma_1, \dots, \sigma_r$ 
        at most $r-1$ of the faces $\sigma_i$ can have a vertex outside of $\Delta_{N-(r-1)}$, 
        so at least one face $\sigma_j$ has all vertices in $\Delta_{N-(r-1)}$. 
       
        \item If all faces $\sigma_1, \dots, \sigma_r$ had at least two vertices in $S$, then 
        we would have $\abs{S} \ge \sum_{i=1}^r \abs{\sigma_i \cap S} \ge 2r$.
        
     \item If all faces $\sigma_1, \dots, \sigma_r$ of a Tverberg partition had dimension at least $k+1$, 
        then this would involve at least $r(k+2)$ vertices.
        
    \item
         If none of the faces $\sigma_1,\dots,\sigma_r$ lies in $\Delta_N^{(k-1)}\cup\Delta_{N-(r-s)}^{(k)}$,
          then they all have dimension at least $k$, and since they are disjoint only $r-s$ of them can involve one of the
          last $r-s$ vertices, so $s$ of them must have dimension at least $k+1$.
          For this $r(k+1)+s$ vertices are needed.\vspace{-4mm} 
    \end{compactenum}
\end{proof}

Parts (\ref{example:original_simplex}), (\ref{example:colored}), and (\ref{example:bounded_dimensions}) 
of the lemma follow from the following more general statement: 
If $S$ is a set of at most $(s+1)r-1$ vertices, $s \ge 0$, then the subcomplex of faces with at most~$s$ vertices 
in~$S$ is Tverberg unavoidable. (Again, this immediately follows from the pigeonhole principle.)
For $s=0$ this gives that for a set $S$ of at most $r-1$ vertices there is a face not intersecting $S$ (having $0$ vertices in common with $S$) in every Tverberg $r$-partition; this is part~(\ref{example:original_simplex}).
For $s=1$ this yields part~(\ref{example:colored}): Here $S$ has size at most $2r-1$ and there is a face with 
at most one vertex in $S$.
For $s$ large enough that $S$ may contain all $N+1$ vertices, this gives a bound on the skeleton. 
Namely, if $(s+1)r-1 \ge N+1$ then there is a face with at most $s$ vertices, i.e., a face in the $(s-1)$-skeleton: 
This yields part~(\ref{example:bounded_dimensions}) for $k=s-1$. 
We thank the anonymous referee for this generalization.

If $r$ is a prime power and $N\ge N_0=(r-1)(d+1)$, then the topological Tverberg theorem  
guarantees the existence of a Tverberg $r$-partition for \emph{any} continuous $f:\Delta_N\to\IR^d$.
We now show that for $N\ge N_1 = (r-1)(d+2)$, every subcomplex $\Sigma\subseteq\Delta_N$ that is Tverberg unavoidable 
for $f$ necessarily contains \emph{all the faces} of some Tverberg $r$-partition.
 
\begin{lemma}[(Key lemma \#2)] \label{lemma:unavoidable}
  Let $r\ge2$ be a prime power, $d\ge1$, and $N\ge N_1 = \allowbreak (r-1)(d+2)$.
  Assume that $f : \Delta_N \to \IR^d$ is continuous and that
  the subcomplex $\Sigma \subseteq \Delta_N$ is Tverberg unavoidable for~$f$.  
  Then there are $r$ pairwise disjoint faces $\sigma_1, \dots, \sigma_r$ of $\Delta_N$, all of~them contained in $\Sigma$,
  such that $f(\sigma_1) \cap \dots \cap f(\sigma_r) \neq \emptyset$. 
\end{lemma}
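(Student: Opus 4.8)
The plan is to feed a suitable one-dimensional constraining function into Key Lemma~\#1 (Lemma~\ref{tverberg_lemma}) with $c=1$. Realize $\Delta_N$ as the standard simplex in $\IR^{N+1}$ and set $g\colon\Delta_N\to\IR$, $g(x):=\operatorname{dist}(x,\abs{\Sigma})$, the Euclidean distance from $x$ to the underlying space of the subcomplex $\Sigma$. This $g$ is continuous (in fact $1$-Lipschitz), and since $\abs{\Sigma}$ is closed we have $g(x)=0$ exactly when $x\in\abs{\Sigma}$.

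Since $N\ge N_1=(r-1)(d+2)=(r-1)(d+1+1)$, Lemma~\ref{tverberg_lemma} applies to the pair $(f,g)$ with $c=1$ and yields pairwise disjoint faces $\sigma_1,\dots,\sigma_r$ of $\Delta_N$ together with points $x_i\in\sigma_i$ satisfying $f(x_1)=\dots=f(x_r)$ and $g(x_1)=\dots=g(x_r)$. In particular $\set{\sigma_1,\dots,\sigma_r}$ is a Tverberg $r$-partition for $f$; such partitions exist by the topological Tverberg theorem (Theorem~\ref{thm:top_tverberg_thm}, since $r$ is a prime power and $N\ge(r-1)(d+1)$), so the notion ``Tverberg unavoidable'' is applicable here. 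By hypothesis $\Sigma$ is Tverberg unavoidable for $f$, so some face $\sigma_j$ lies in $\Sigma$, whence $x_j\in\abs{\Sigma}$ and $g(x_j)=0$. As all the values $g(x_i)$ coincide, $g(x_i)=0$, and thus $x_i\in\abs{\Sigma}$, for every $i$.

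Finally, to obtain \emph{faces} inside $\Sigma$, replace each $\sigma_i$ by the carrier $\tau_i$ of $x_i$, i.e.\ the smallest face of $\Delta_N$ containing $x_i$ in its relative interior. Because $\Sigma$ is a subcomplex and $x_i\in\abs{\Sigma}$, the face $\tau_i$ belongs to $\Sigma$; the $\tau_i$ remain pairwise disjoint since $\tau_i\subseteq\sigma_i$; and $f(x_1)=\dots=f(x_r)$ certifies $f(\tau_1)\cap\dots\cap f(\tau_r)\neq\emptyset$ because $f(x_i)\in f(\tau_i)$ for each $i$. Renaming $\tau_i$ back to $\sigma_i$ proves the lemma.

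There is no serious obstacle once the setup of Sections~\ref{sec:constraining_function} and~\ref{sec:T-unavoidable-subcomplexes} is in place: the content is the choice of constraint---distance to $\abs{\Sigma}$, so that its zero fiber is precisely $\abs{\Sigma}$---combined with the ``pigeonhole'' effect that unavoidability pins one of the points $x_i$ into $\Sigma$ while equalizing the constraint forces all the others to follow. The only point needing a little care is the passage to carriers at the end, which is necessary because membership of a point in $\abs{\Sigma}$ constrains only its carrier and not the possibly larger face $\sigma_i$ delivered by Lemma~\ref{tverberg_lemma}.
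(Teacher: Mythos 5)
Your proof is correct and is essentially the paper's own argument: you take the constraint $g=\operatorname{dist}(\,\cdot\,,\Sigma)$, feed it into Key Lemma \#1 with $c=1$ (using $N\ge N_1$), let unavoidability force one and hence all of the values $g(x_i)$ to vanish, and then pass to the minimal faces carrying the points $x_i$ to land inside $\Sigma$. The only cosmetic difference is that the paper assumes the faces are inclusion-minimal before invoking unavoidability, whereas you shrink to carriers at the end; both orderings are valid since the faces produced by Key Lemma \#1 already form a Tverberg partition to which unavoidability applies.
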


\begin{proof}
  Let $g : \Delta_N \to \IR$ assign to each point $x \in \Delta_N$ its distance to $\Sigma$. This map $g$ is continuous.
  We have $g(x) = 0$ if and only if $x$ belongs to $\Sigma$, since $\Sigma\subseteq\Delta_N$ is closed.
 
By Lemma \ref{tverberg_lemma} there are points $x_1, \dots, x_r$ 
in pairwise disjoint faces $\sigma_1, \dots, \sigma_r\subset\Delta_N$ 
with $f(x_1) =   \dots = f(x_r)$ and $g(x_1) =   \dots = g(x_r)$. 
We may assume that the $\sigma_i$ are inclusion-minimal with $x_i \in \sigma_i$, that is, 
$\sigma_i$ is the unique face with $x_i$ in its relative interior. 
Since $\Sigma$ is Tverberg unavoidable for~$f$, at least one of these faces, say $\sigma_j$, is contained in $\Sigma$. 
Thus $g(x_j) = 0$, which implies that $g(x_i) = 0$ for all $i = 1, \dots, r$. 
Thus all points $x_i$ lie in $\Sigma$ and since the $\sigma_i$ are inclusion-minimal 
and $g$ vanishes at all points in the relative interior of a face if and only if it vanishes at some such point, 
the $\sigma_i$ belong to~$\Sigma$.%
\end{proof}
 
With Lemma \ref{example:unavoidable}(\ref{example:original_simplex}), this shows that
the topological Tverberg theorem for maps to $\IR^{d+1}$
immediately yields the lower-dimensional version for maps to $\IR^d$. 
Thus for some $r$ (not necessarily a prime power) it suffices to show the topological
Tverberg theorem for arbitrarily large dimensions.
(See de Longueville \cite[Prop. 2.5]{delongueville2001notes} for this observation: 
his proof uses an extended target space and the observation that the induced subcomplex of $N-r+2$ vertices is unavoidable.) 
Lemma~\ref{example:unavoidable}(\ref{example:colored}) will in the next section get us colored versions of the topological Tverberg theorem.
From the parts (\ref{example:bounded_dimensions}) and (\ref{example:bounded_dimensions_non_uniform}) 
of Lemma~\ref{example:unavoidable} we will in Section~\ref{sec:prescribing_dimensions}
derive versions of the topological Tverberg theorem with a bound on the dimension of the intersecting faces.

The following is a straightforward extension of Lemma~\ref{lemma:unavoidable}.

\begin{theorem} \label{theorem:multiple_unavoidable}
  Let $r\ge2$ be a prime power, $d\ge1$, $c\ge1$, and $N\ge N_c = (r-1)(d+1+c)$.\break%
  Let $f : \Delta_N \to \IR^d$ be continuous and let $\Sigma_1,\Sigma_2,\dots,\Sigma_c \subseteq \Delta_N$ be Tverberg unavoidable subcomplexes for~$f$. 
  Then there are $r$ pairwise disjoint faces $\sigma_1, \dots, \sigma_r$ in $\Sigma_1\cap\dots\cap \Sigma_c$ such that $f(\sigma_1) \cap \dots \cap f(\sigma_r) \neq \emptyset$. 
\end{theorem}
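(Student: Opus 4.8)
The plan is to imitate the proof of Lemma~\ref{lemma:unavoidable}, replacing its single distance function by a $c$-dimensional constraint that records the distances to all $c$ subcomplexes at once. First I would set, for each $i\in\{1,\dots,c\}$, the map $g_i:\Delta_N\to\IR$ to be the distance to $\Sigma_i$; this is continuous, and since $\Sigma_i$ is closed in $\Delta_N$ one has $g_i(x)=0$ if and only if $x\in\Sigma_i$. I then bundle these into the continuous map $g=(g_1,\dots,g_c):\Delta_N\to\IR^c$.

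Next, since $N\ge N_c=(r-1)(d+1+c)$, I would apply Lemma~\ref{tverberg_lemma} to $f$ and this $g$ to obtain points $x_1,\dots,x_r$ in pairwise disjoint faces $\sigma_1,\dots,\sigma_r$ of $\Delta_N$ with $f(x_1)=\dots=f(x_r)$ and $g(x_1)=\dots=g(x_r)$, the latter meaning $g_i(x_1)=\dots=g_i(x_r)$ for every $i$. As in Lemma~\ref{lemma:unavoidable} I would take the $\sigma_i$ inclusion-minimal, so that $x_i$ lies in the relative interior of $\sigma_i$. Since $f(x_1)=\dots=f(x_r)$, the point $f(x_1)$ witnesses $f(\sigma_1)\cap\dots\cap f(\sigma_r)\ne\emptyset$, so $\{\sigma_1,\dots,\sigma_r\}$ is already a Tverberg $r$-partition for $f$.

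Finally I would run the unavoidability argument once per index: fixing $i$, unavoidability of $\Sigma_i$ for $f$ forces some $\sigma_j$ to lie in $\Sigma_i$, hence $g_i(x_j)=0$, hence (by the equalities above) $g_i(x_k)=0$ and so $x_k\in\Sigma_i$ for every $k$; since the distance function to a subcomplex vanishes on an entire face as soon as it vanishes at one relative-interior point of that face, and the $\sigma_k$ are inclusion-minimal, this gives $\sigma_k\subseteq\Sigma_i$ for all $k$. Iterating over $i=1,\dots,c$ yields $\sigma_1,\dots,\sigma_r\subseteq\Sigma_1\cap\dots\cap\Sigma_c$ with intersecting images, as required.

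I do not expect a genuine obstacle here; it is essentially bookkeeping on top of Lemma~\ref{lemma:unavoidable}. The two points to handle with care are that the conclusion of Lemma~\ref{tverberg_lemma} already produces a bona fide Tverberg partition (so the $c$ applications of unavoidability are independent and their conclusions ``$\sigma_k\subseteq\Sigma_i$'' simply accumulate over $i$), and the relative-interior property of distance functions that legitimizes the passage from ``$x_k\in\Sigma_i$'' to ``$\sigma_k\subseteq\Sigma_i$'' — exactly the step where inclusion-minimality of the $\sigma_i$ is used.
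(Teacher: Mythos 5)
Your proposal is correct and follows exactly the paper's route: the paper also bundles the $c$ distance functions $g_i(x)=\operatorname{dist}(x,\Sigma_i)$ into a map $g:\Delta_N\to\IR^c$, applies Lemma~\ref{tverberg_lemma}, and then invokes unavoidability of each $\Sigma_i$, leaving the inclusion-minimality/relative-interior bookkeeping implicit as a ``straightforward extension'' of Lemma~\ref{lemma:unavoidable}. Your write-up just makes those implicit details explicit, which is fine.
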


\begin{proof}
    We may assume that $N=N_c$.
  Let $g_i : \Delta_{N_c} \to \IR$ assign to $x \in \Delta_N$ the distance to the subcomplex $\Sigma_i$, 
and consider $g : \Delta_{N_c} \to \IR^c$, $x \mapsto (g_1(x), \dots, g_c(x))$. 
Now use Lemma \ref{tverberg_lemma} and that the~$\Sigma_i$ are Tverberg unavoidable.
\end{proof}
 
\begin{question}
  Does Theorem \ref{theorem:multiple_unavoidable} remain true for $r$ an arbitrary positive integer and $f$ affine?
\end{question}

\section{Weak colored versions of the Tverberg theorem}\label{sec:colored}

An interesting way to constrain Tverberg partitions is to color the vertices of the simplex $\Delta_N$
and to require that the faces in the Tverberg partition have no two vertices of the same color:
B\'ar\'any \& Larman \cite{barany1992colored} asked for the minimal $N$ such that for any affine map and for any
coloring of the $N+1$ vertices of~$\Delta_N$ by $d+1$ colors, where each color class should have size at least~$r$,
a Tverberg partition of $r$ faces, each without repeated colors, exists. 

Suppose that the vertices of $\Delta_N$ are partitioned into color classes.  
Denote by $R\subseteq\Delta_N$ the \emph{rainbow complex}, 
that is, the subcomplex of faces that have at most one vertex of each color class. These faces are called \emph{rainbow faces}. 
Lemma \ref{tverberg_lemma} implies that for any coloring of the vertices of $\Delta_N$ with $N\ge N_1 = (r-1)(d+2)$ 
and for any continuous map $f : \Delta_N \to \IR^d$ there is a Tverberg partition 
such that the points of Tverberg coincidence for $f$ have the same distance to the rainbow complex. 
We will determine some conditions that a coloring must fulfil such that the rainbow complex is Tverberg 
unavoidable or is an intersection of few Tverberg unavoidable subcomplexes.

\begin{lemma}  
  Let $r\ge2$ be a prime power, $d \ge 1$, and $N\ge N_1 = (r-1)(d+2)$. Let $S \subseteq \Delta_N$ be any set of at most $2r-1$ vertices, 
  and let $f : \Delta_N \to \IR^d$ be any continuous function. 
  Then there are $r$ pairwise disjoint faces $\sigma_1, \dots, \sigma_r$ of~$\Delta_N$ with $\abs{\sigma_i \cap S} \le 1$ such that 
  $f(\sigma_1) \cap \cdots\allowbreak \cap f(\sigma_r) \neq \emptyset$. 
\end{lemma}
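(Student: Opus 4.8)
The plan is to combine the two key lemmas developed so far. First I would invoke Lemma~\ref{example:unavoidable}(\ref{example:colored}): since $S$ is a set of at most $2r-1$ vertices of $\Delta_N$, the subcomplex $\Sigma_S\subseteq\Delta_N$ consisting of all faces with at most one vertex in $S$ is Tverberg unavoidable for \emph{every} continuous map $\Delta_N\to\IR^d$ that admits a Tverberg $r$-partition. Since $r$ is a prime power and $N\ge N_1 = (r-1)(d+2) \ge (r-1)(d+1) = N_0$, the topological Tverberg theorem (Theorem~\ref{thm:top_tverberg_thm}) guarantees that our given $f$ does have a Tverberg $r$-partition, so $\Sigma_S$ is indeed Tverberg unavoidable for~$f$.

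Next I would apply Lemma~\ref{lemma:unavoidable} (Key lemma \#2) to $f$ and the subcomplex $\Sigma = \Sigma_S$. The hypothesis $N\ge N_1 = (r-1)(d+2)$ is exactly what that lemma requires, and we have just checked that $\Sigma_S$ is Tverberg unavoidable for~$f$. The conclusion of Lemma~\ref{lemma:unavoidable} then yields $r$ pairwise disjoint faces $\sigma_1,\dots,\sigma_r$ of $\Delta_N$, \emph{all} contained in $\Sigma_S$, with $f(\sigma_1)\cap\dots\cap f(\sigma_r)\neq\emptyset$. But "$\sigma_i\in\Sigma_S$" means precisely $\abs{\sigma_i\cap S}\le 1$ for each~$i$, which is exactly the desired conclusion.

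There is essentially no obstacle here: the statement is an immediate specialization of the general machinery, obtained by plugging the specific unavoidable subcomplex of Lemma~\ref{example:unavoidable}(\ref{example:colored}) into Lemma~\ref{lemma:unavoidable}. The only point that deserves a word of care is the verification that the relevant unavoidability hypothesis actually applies to the map $f$ in question — i.e.\ that $f$ has at least one Tverberg $r$-partition so that "Tverberg unavoidable for $f$" is a meaningful and satisfied condition — and this is handled by the prime-power hypothesis on $r$ together with $N\ge N_0$ via the topological Tverberg theorem. One could equally phrase the whole argument in a single step by applying Lemma~\ref{tverberg_lemma} to $f$ together with the constraining function $g(x) = \operatorname{dist}(x,\Sigma_S)$ and then using the pigeonhole argument of Lemma~\ref{example:unavoidable}(\ref{example:colored}) to force all the (inclusion-minimal) faces into $\Sigma_S$, but citing the two key lemmas is cleaner.
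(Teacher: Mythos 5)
Your proof is correct and follows exactly the paper's route: apply Lemma~\ref{example:unavoidable}(\ref{example:colored}) to see that the subcomplex of faces with at most one vertex in $S$ is Tverberg unavoidable, then invoke Lemma~\ref{lemma:unavoidable} to place a full Tverberg partition inside it. The extra remark that the topological Tverberg theorem supplies at least one Tverberg $r$-partition (so that unavoidability is a meaningful hypothesis for~$f$) is a careful touch the paper leaves implicit, but nothing more is needed.
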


\begin{proof}
  The subcomplex $\Sigma$ of faces with at most one vertex in $S$ is Tverberg unavoidable by Lemma \ref{example:unavoidable}(\ref{example:colored}). Thus $\Sigma$ contains a Tverberg partition.
\end{proof}
 
This is a colored version of the topological Tverberg theorem, where the vertices in $S$ are in one color class and all other vertices are colored with distinct colors. 
If we assume that $\abs{S} \ge r$ then we can also state this theorem with equalities $\abs{\sigma_i \cap S} = 1$ by adding a point in $S$ to every face $\sigma_i$ that is disjoint from $S$. 

The following ``colored Radon theorem'' (a restatement of the Borsuk--Ulam theorem) is a direct consequence for $r = 2$.

\begin{corollary}[(Colored Radon: Vre\'cica \& \v{Z}ivaljevi\'c {\cite[Cor.~7]{vrecica_zivaljevic:chessboard_complexes}})]\label{cor:colored_Radon} 
  Let $d \ge 1$,  let  the map $f : \Delta_{d+2} \to \IR^d$ be continuous and let $S \subseteq \Delta_{d+2}$ be a set of three vertices. Then there are disjoint faces $\sigma_1, \sigma_2$ in $\Delta_{d+2}$ with $\abs{\sigma_1 \cap S} \le 1$ and $\abs{\sigma_2 \cap S} \le 1$ such that $f(\sigma_1) \cap f(\sigma_2) \neq \emptyset$.
\end{corollary}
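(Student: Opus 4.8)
The plan is to derive Corollary~\ref{cor:colored_Radon} as the special case $r=2$ of the preceding lemma. For $r=2$ the hypothesis $N\ge N_1=(r-1)(d+2)=d+2$ is satisfied with equality by the simplex $\Delta_{d+2}$, and a set $S$ of three vertices satisfies $|S|=3\le 2r-1=3$, so the lemma applies verbatim. Thus I would first invoke the lemma with $r=2$: the subcomplex $\Sigma$ of faces of $\Delta_{d+2}$ with at most one vertex in $S$ is Tverberg unavoidable by Lemma~\ref{example:unavoidable}(\ref{example:colored}), and therefore (since $N\ge N_1$) Lemma~\ref{lemma:unavoidable} produces two disjoint faces $\sigma_1,\sigma_2$ of $\Delta_{d+2}$, both lying in $\Sigma$, with $f(\sigma_1)\cap f(\sigma_2)\neq\emptyset$. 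Being in $\Sigma$ means precisely $|\sigma_1\cap S|\le 1$ and $|\sigma_2\cap S|\le 1$, which is the assertion.

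The only genuine content to double-check is that the topological Tverberg theorem is actually available in the form needed, i.e.\ that $r=2$ is a prime power (it is) and that Lemma~\ref{tverberg_lemma} with $c=1$, applied to $x\mapsto(f(x),g(x))$ where $g$ is the distance to $\Sigma$, requires only $N\ge (r-1)(d+1+c)=d+2$, matching our $N=d+2$. So no hypothesis is wasted. I would also remark, as the paper does, that the case $r=2$ of Key Lemma~\#1 is exactly the topological Radon theorem, and that its equivalence with Borsuk--Ulam is classical; hence this corollary can be read as a ``colored'' refinement of Borsuk--Ulam, recovering Vre\'cica \& \v{Z}ivaljevi\'c's result.

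There is essentially no obstacle here: the statement is a direct specialization, and the main step---verifying that $\Sigma$ is Tverberg unavoidable---was already carried out in Lemma~\ref{example:unavoidable}(\ref{example:colored}) via the pigeonhole bound $|S|\ge 2r$ that would follow if every face had two vertices in $S$. The one place to be slightly careful is the passage, inside the proof of Lemma~\ref{lemma:unavoidable}, from ``some face $\sigma_j$ lies in $\Sigma$'' to ``all the $\sigma_i$ lie in $\Sigma$'': this uses the equalized distance $g(x_1)=\dots=g(x_r)$ together with inclusion-minimality of the faces, so that $g(x_j)=0$ forces $g(x_i)=0$ for all $i$ and hence each $x_i\in\Sigma$ and then each $\sigma_i\subseteq\Sigma$. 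For $r=2$ this is transparent, so the proof reduces to a one-line citation of the lemma.

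\begin{proof}
  Apply the preceding lemma with $r=2$, $N=d+2=N_1$, and the given set $S$ of $|S|=3\le 2r-1$ vertices: the subcomplex of faces of $\Delta_{d+2}$ with at most one vertex in $S$ is Tverberg unavoidable by Lemma~\ref{example:unavoidable}(\ref{example:colored}), so by Lemma~\ref{lemma:unavoidable} it contains two disjoint faces $\sigma_1,\sigma_2$ with $f(\sigma_1)\cap f(\sigma_2)\neq\emptyset$. These faces satisfy $\abs{\sigma_1\cap S}\le 1$ and $\abs{\sigma_2\cap S}\le 1$ by construction.
\end{proof}
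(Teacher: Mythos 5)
Your proof is correct and follows exactly the paper's route: the paper obtains Corollary~\ref{cor:colored_Radon} as the direct specialization $r=2$, $N=d+2=N_1$, $|S|=3=2r-1$ of the preceding lemma, which itself rests on Lemma~\ref{example:unavoidable}(\ref{example:colored}) and Lemma~\ref{lemma:unavoidable}. Your additional checks (that $r=2$ is a prime power and that the dimension count $N\ge(r-1)(d+1+c)$ with $c=1$ is tight) are accurate but not needed beyond citing the lemma.
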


\begin{theorem}[(Weak colored Tverberg)] \label{theorem:colored_tverberg}
  Let $r\ge2$ be a prime power, $d \ge 1$, $N\ge N_{d+1} = (r-1)(2d+2)$ and let $f : \Delta_N \to \IR^d$ be continuous. 
  If the vertices of $\Delta_N$ are colored by $d+1$ colors, where each color class has cardinality at most $2r-1$,
  then there are $r$ pairwise disjoint rainbow faces $\sigma_1, \dots, \sigma_r$ of~$\Delta_N$ such that $f(\sigma_1) \cap \dots \cap f(\sigma_r) \neq \emptyset$. 
\end{theorem}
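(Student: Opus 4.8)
The plan is to realize the rainbow complex as an intersection of $d+1$ Tverberg unavoidable subcomplexes and then to invoke Theorem~\ref{theorem:multiple_unavoidable}. First I would fix the given coloring of the vertex set of $\Delta_N$ into color classes $C_0 \uplus \dots \uplus C_d$ with $\abs{C_i} \le 2r-1$ for every $i$. Since $N \ge N_{d+1} = (r-1)(2d+2) \ge (r-1)(d+1) = N_0$, the topological Tverberg theorem (Theorem~\ref{thm:top_tverberg_thm}) guarantees that $f$ has at least one Tverberg $r$-partition, so the notion ``Tverberg unavoidable for $f$'' is meaningful here.

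Next, for each $i \in \set{0, \dots, d}$ I would let $\Sigma_i \subseteq \Delta_N$ be the subcomplex of faces having at most one vertex in the color class $C_i$. Because $\abs{C_i} \le 2r-1$, Lemma~\ref{example:unavoidable}(\ref{example:colored}) shows that each $\Sigma_i$ is Tverberg unavoidable for $f$. The key elementary observation is that a face $\sigma$ of $\Delta_N$ is a rainbow face --- i.e.\ has at most one vertex of each color --- exactly when it lies in every $\Sigma_i$; hence the rainbow complex satisfies $R = \Sigma_0 \cap \dots \cap \Sigma_d$.

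Then I would apply Theorem~\ref{theorem:multiple_unavoidable} with $c = d+1$ to the subcomplexes $\Sigma_0, \dots, \Sigma_d$. The hypothesis of that theorem, $N \ge N_c = (r-1)(d+1+(d+1)) = (r-1)(2d+2) = N_{d+1}$, is precisely what is assumed in the present statement. The theorem produces $r$ pairwise disjoint faces $\sigma_1, \dots, \sigma_r$ contained in $\Sigma_0 \cap \dots \cap \Sigma_d = R$ with $f(\sigma_1) \cap \dots \cap f(\sigma_r) \neq \emptyset$; these are $r$ pairwise disjoint rainbow faces whose images meet, which is the conclusion.

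Since Theorem~\ref{theorem:multiple_unavoidable} in turn reduces, via Lemma~\ref{tverberg_lemma}, to applying the topological Tverberg theorem as a black box to the map $x \mapsto (f(x), g_0(x), \dots, g_d(x))$ where $g_i$ records the distance to $\Sigma_i$, there is essentially no genuine obstacle here: the proof is a direct assembly of the earlier lemmas. The only point requiring a moment's care is the bookkeeping that the number of added constraints equals the number of colors, namely $d+1$, which is exactly what forces the dimension bound $N_{d+1}$; and the two elementary verifications that each $\Sigma_i$ is unavoidable and that the $\Sigma_i$ meet in the rainbow complex, both immediate from the pigeonhole counts in Lemma~\ref{example:unavoidable}.
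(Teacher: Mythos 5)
Your proposal is correct and matches the paper's proof essentially verbatim: identify each $\Sigma_i$ (faces with at most one vertex of color $i$) as Tverberg unavoidable via Lemma~\ref{example:unavoidable}(\ref{example:colored}), note that the rainbow complex is their intersection, and apply Theorem~\ref{theorem:multiple_unavoidable} with $c=d+1$, which is exactly where the bound $N \ge N_{d+1}=(r-1)(2d+2)$ comes from. The extra bookkeeping you supply (existence of a Tverberg partition so that unavoidability is meaningful, and the reduction to Lemma~\ref{tverberg_lemma} via the distance functions) is implicit in the paper and entirely consistent with it.
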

  
\begin{proof} 
  For each fixed color $i$, the subcomplex $\Sigma_i$ of faces that have at most one vertex of color $i$ is Tverberg unavoidable 
  by Lemma \ref{example:unavoidable}(\ref{example:colored}). The complex of rainbow faces is $\Sigma_1\cap\cdots\allowbreak \cap \Sigma_{d+1}$. 
  Now use Theorem~\ref{theorem:multiple_unavoidable}.
\end{proof}

Note that in Theorem~\ref{theorem:colored_tverberg} the fact that all vertices can be colored with $d+1$ colors of size at most $2r-1$
implies that $N+1 \le (2r-1)(d+1)$. The theorem is ``weak'' as it needs a large number of points/vertices to reach its
conclusion, namely $N+1\ge N_{d+1}+1=(r-1)(2d+2)+1$, while the optimal result requires only $N+1\ge N_0+1=(r-1)(d+1)+1$ of them, as in
Theorem~\ref{theorem:optimal} below.
The special case of Theorem~\ref{theorem:colored_tverberg} when all color classes have the same  
cardinality $2r-1$, and thus $N+1 = (d+1)(2r-1)$, is the colored Tverberg theorem of 
\v{Z}ivaljevi\'c \& Vre\'cica \cite{vzivaljevic1992colored}. As we do \emph{not} need to require all color
classes to have the same size (a simple observation that apparently was first made in \cite{blagojevic2009optimal}), 
we need $d$ points/vertices less to force a colored Tverberg partition.

Theorem~\ref{theorem:colored_tverberg} leaves some flexibility in the choice of color classes: For example, we could consider $d$ colors of cardinality $2r-2$ and one color class of size $2r-1$. Instead of shrinking the size of color classes we can also allow fewer color classes. This gives a colored Tverberg theorem ``of type B'' 
(in terminology of Vre\'cica \& \v{Z}ivaljevi\'c introduced in \cite{vrecica1994new}), 
that is, fewer than $d+1$ colors are possible. 

We proceed using the method provided in Sections~\ref{sec:constraining_function} and \ref{sec:T-unavoidable-subcomplexes}, 
and thus obtain the following result, which 
in the special case where all color classes have the same size is the main result of~\cite{vrecica1994new}.
It also implies Theorem~\ref{theorem:colored_tverberg}.

\begin{theorem}
  Let $r\ge2$ be a prime power, $d \ge 1$, $c \ge \lceil \frac{r-1}{r}d \rceil + 1$, and $N\ge N_c = \break(r-1)(d+1+c)$.
  Let $f : \Delta_N \to \IR^d$ be continuous. If the vertices of $\Delta_N$ are divided into $c$ color classes,
  each of them of cardinality at most $2r-1$, then there are $r$ pairwise disjoint rainbow faces $\sigma_1, \dots, \sigma_r$ of~$\Delta_N$ such that $f(\sigma_1) \cap \dots \cap f(\sigma_r) \neq \emptyset$. 
\end{theorem}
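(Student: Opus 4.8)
The plan is to recognize this statement as an essentially immediate consequence of Theorem~\ref{theorem:multiple_unavoidable} once the right family of Tverberg unavoidable subcomplexes attached to the coloring has been identified.

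First I would note that since $r$ is a prime power and $N\ge N_c\ge N_0=(r-1)(d+1)$, the topological Tverberg theorem (Theorem~\ref{thm:top_tverberg_thm}) guarantees that $f$ has at least one Tverberg $r$-partition, so that the notion ``Tverberg unavoidable for $f$'' is meaningful in the situation at hand. Next, let $C_1,\dots,C_c$ be the given color classes and, for each $i$, let $\Sigma_i\subseteq\Delta_N$ be the subcomplex of faces having at most one vertex in $C_i$. Since $|C_i|\le 2r-1$, Lemma~\ref{example:unavoidable}(\ref{example:colored}) applied with $S=C_i$ shows that each $\Sigma_i$ is Tverberg unavoidable for $f$. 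By the definition of the rainbow complex, a face lies in $R$ precisely when it has at most one vertex in each $C_i$, so $R=\Sigma_1\cap\dots\cap\Sigma_c$.

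Finally, since $N\ge N_c=(r-1)(d+1+c)$, the hypotheses of Theorem~\ref{theorem:multiple_unavoidable} are met for the subcomplexes $\Sigma_1,\dots,\Sigma_c$, and it produces $r$ pairwise disjoint faces $\sigma_1,\dots,\sigma_r$ lying in $\Sigma_1\cap\dots\cap\Sigma_c=R$ with $f(\sigma_1)\cap\dots\cap f(\sigma_r)\neq\emptyset$. These $\sigma_i$ are exactly the desired pairwise disjoint rainbow faces forming a Tverberg partition, and the proof is complete.

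I do not expect a genuine obstacle here: the content is carried entirely by the machinery of Sections~\ref{sec:constraining_function} and \ref{sec:T-unavoidable-subcomplexes}. The one point worth spelling out is the role of the hypothesis $c\ge\lceil\frac{r-1}{r}d\rceil+1$, which is not used in the argument above but is what makes the coloring assumption non-vacuous in the extremal case: a short computation shows this inequality is equivalent to $N_c+1\le c(2r-1)$, i.e.\ to the possibility of partitioning all $N+1$ vertices of $\Delta_{N_c}$ into $c$ classes of size at most $2r-1$. So beyond this bookkeeping there is essentially nothing left to do.
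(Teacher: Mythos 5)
Your proof is correct and follows essentially the same route as the paper: each color class $C_i$ of size at most $2r-1$ gives a Tverberg unavoidable subcomplex $\Sigma_i$ via Lemma~\ref{example:unavoidable}(\ref{example:colored}), the rainbow complex is $\Sigma_1\cap\dots\cap\Sigma_c$, and Theorem~\ref{theorem:multiple_unavoidable} (valid since $N\ge N_c$) finishes the argument. Your remark on the hypothesis $c\ge\lceil\tfrac{r-1}{r}d\rceil+1$ also matches the paper's own explanation, namely that it is exactly the condition $c(2r-1)\ge N_c+1$ making the coloring assumption satisfiable.
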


\begin{proof}
  Again we may assume that $N=N_c$.
  We need that all $N_c+1$ vertices can be colored by $c$ colors of cardinality at most $2r-1$ to use 
  Theorem~\ref{theorem:multiple_unavoidable}, that is, we need $c(2r-1)\ge N_c+1$, 
  which is equivalent to $c \ge \lceil \frac{r-1}{r}d \rceil + 1$.
\end{proof}

\section{Prescribing dimensions for Tverberg simplices}\label{sec:prescribing_dimensions}

If $N$ is sufficiently large, can we set an upper bound $\dim(\sigma_i)\le k$ for the
dimensions of the faces in a Tverberg $r$-partition for $f:\Delta_N\to\IR^d$? 
For the case $r = 2$ such a result is due to 
van Kampen \cite{van1933komplexe} and independently Flores~\cite{flores:selbstverschlungen}.%

\begin{theorem}  [(van Kampen--Flores)]\label{thm:vanKampen-Flores}
  Let $d\ge2$ be even. Then for every continuous map $f : \Delta_{d+2} \to \IR^d$ there are two disjoint faces $\sigma_1,\sigma_2\subset\Delta_{d+2}$ of dimension at most $ \frac{d}{2} $ in $\Delta_{d+2}$ with $f(\sigma_1) \cap f(\sigma_2) \neq \emptyset$.
\end{theorem}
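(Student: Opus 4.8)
The plan is to derive the van Kampen--Flores theorem as a special case of the machinery just developed, namely from the topological Tverberg theorem together with the skeleton example in Lemma~\ref{example:unavoidable}(\ref{example:bounded_dimensions}) and the unavoidability-forces-a-full-partition statement of Lemma~\ref{lemma:unavoidable}. First I would set the parameters: take $r=2$ and $k=\frac{d}{2}$. Then $N_1=(r-1)(d+2)=d+2$, so the hypothesis $N\ge N_1$ of Lemma~\ref{lemma:unavoidable} holds with $N=d+2$; here the evenness of $d$ is used so that $k=\frac d2$ is an integer.

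Next I would check that the $k$-skeleton $\Delta_{d+2}^{(k)}$ is Tverberg unavoidable for $f$. By Lemma~\ref{example:unavoidable}(\ref{example:bounded_dimensions}) this holds provided $r(k+2)>N+1$, i.e.\ $2(\frac d2+2)=d+4>d+3=N+1$, which is true. (One also needs that $f$ has a Tverberg $2$-partition at all, which is guaranteed by the topological Tverberg theorem, Theorem~\ref{thm:top_tverberg_thm}, since $r=2$ is a prime power and $N=d+2\ge(r-1)(d+1)=d+1$.) Then I would invoke Lemma~\ref{lemma:unavoidable} with $\Sigma=\Delta_{d+2}^{(k)}$: since $N=d+2=N_1$ and $\Sigma$ is Tverberg unavoidable for $f$, there are two pairwise disjoint faces $\sigma_1,\sigma_2$, both contained in $\Delta_{d+2}^{(k)}$, with $f(\sigma_1)\cap f(\sigma_2)\neq\emptyset$. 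Faces in the $k$-skeleton have dimension at most $k=\frac d2$, which is exactly the assertion.

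I do not expect a genuine obstacle here, since everything is a bookkeeping check of the inequalities $N\ge N_1$ and $r(k+2)>N+1$ for the chosen parameters; the only subtlety worth flagging is the role of the parity hypothesis on $d$, which enters solely to make $\frac d2$ an integer (so that ``dimension at most $\frac d2$'' is the same as ``lies in the $\lfloor\frac d2\rfloor$-skeleton''). If one wanted the sharper or more general statements, one would instead feed parts (\ref{example:bounded_dimensions}) and (\ref{example:bounded_dimensions_non_uniform}) of Lemma~\ref{example:unavoidable} into Lemma~\ref{lemma:unavoidable} or Theorem~\ref{theorem:multiple_unavoidable} with larger $r$ and $N$, which is presumably what Section~\ref{sec:prescribing_dimensions} goes on to do; for the classical van Kampen--Flores statement as displayed, the three-line argument above suffices.
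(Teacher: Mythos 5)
Your proposal is correct and matches the paper's own route: the paper establishes exactly this statement as the $r=2$, $k=\frac d2$ case of Theorem~\ref{theorem:bounded_dimensions}, whose proof likewise feeds the $k$-skeleton's unavoidability (Lemma~\ref{example:unavoidable}(\ref{example:bounded_dimensions}), via $r(k+2)>N+1$) into Lemma~\ref{lemma:unavoidable} with $N=N_1=(r-1)(d+2)$. Your parameter checks, including noting that the topological Tverberg theorem supplies the Tverberg $2$-partition needed for the unavoidability definition, are exactly the bookkeeping the paper's argument relies on.
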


This was generalized to a prime number $r$ of faces by Sarkaria \cite{sarkaria:generalized_van-kampen_flores}
and later to prime powers $r$ by Volovikov~\cite{volovikov:van-kampen_flores}. In fact, Volovikov's result holds for maps to manifolds that induce a trivial homomorphism on cohomology in dimension $k$, where $k$ is the desired bound on the dimension of faces. We restate Volovikov's result here for the case that the target space is Euclidean space. A collection of sets $S_1, \dots, S_r$ is called \emph{$j$-wise disjoint} if the intersection of any $j$ of these sets is empty. 

\begin{theorem}[(Generalized van Kampen--Flores: Sarkaria \cite{sarkaria:generalized_van-kampen_flores}, Volovikov \cite{volovikov:van-kampen_flores})]%
    \label{theorem:generalized_van-kampen_flores}%
  Let $r\ge2$ be a prime power, $2\le j\le r$, $d\ge1$, and $k<d$ such that
  there is an integer $m\ge0$ that satisfies
  \begin{equation}\label{eqn:SarkariaVolovikov-condition}
    (r-1)(m+1)+r(k+1)\ \ \ge\ \ (N+1)(j-1)\ \ >\ \ (r-1)(m+d+2).  
  \end{equation}
  Then for every continuous map $f : \Delta_N \to \IR^d$ there are $r$ $j$-wise disjoint faces 
  $\sigma_1, \dots, \sigma_r$ of~$\Delta_N$ with $\dim\sigma_i \le k$ for $1\le i\le r$,
   such that $f(\sigma_1) \cap \dots \cap f(\sigma_r) \neq \emptyset$.
\end{theorem}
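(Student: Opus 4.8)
The plan is to deduce the statement from the topological Tverberg theorem on a single, suitably enlarged simplex: I would encode the dimension bound $\dim\sigma_i\le k$ through a Tverberg unavoidable skeleton (as in Section~\ref{sec:T-unavoidable-subcomplexes}), and encode the passage from pairwise to $j$-wise disjointness by pulling $f$ back along a vertex map with fibers of size at most $j-1$. First I would fix an integer $m\ge0$ as supplied by \eqref{eqn:SarkariaVolovikov-condition} and set $M:=(N+1)(j-1)-(r-1)m-1$. The reason for this choice is that, writing $(N+1)(j-1)=M+1+(r-1)m$, the left inequality of \eqref{eqn:SarkariaVolovikov-condition} becomes $r(k+2)>M+1$, while the right inequality becomes $M+1>(r-1)(d+2)$, i.e.\ $M\ge N_1=(r-1)(d+2)$. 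In particular $M\ge(r-1)(d+1)$, so since $r$ is a prime power, Theorem~\ref{thm:top_tverberg_thm} guarantees that every continuous map $\Delta_M\to\IR^d$ has a Tverberg $r$-partition; Lemma~\ref{example:unavoidable}(\ref{example:bounded_dimensions}) then shows that the $k$-skeleton $\Delta_M^{(k)}$ is Tverberg unavoidable for every such map; and $M\ge N_1$ is exactly the hypothesis required to apply Lemma~\ref{lemma:unavoidable}.

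Next I would choose the collapsing map. Since $M+1=(N+1)(j-1)-(r-1)m\le(N+1)(j-1)$, there is a map $\lambda\colon\{0,\dots,M\}\to\{0,\dots,N\}$ on vertex sets all of whose fibers have at most $j-1$ elements; let $\lambda_\Delta\colon\Delta_M\to\Delta_N$ be its affine extension. This map is simplicial: a face $\tau$ of $\Delta_M$ is sent to the face $\lambda_\Delta(\tau)$ of $\Delta_N$ spanned by the images of the vertices of $\tau$, and $\dim\lambda_\Delta(\tau)\le\dim\tau$. I would then apply Lemma~\ref{lemma:unavoidable} to the continuous map $F:=f\circ\lambda_\Delta\colon\Delta_M\to\IR^d$ and to the Tverberg unavoidable subcomplex $\Sigma:=\Delta_M^{(k)}$, obtaining pairwise disjoint faces $\tau_1,\dots,\tau_r$ of $\Delta_M$, all contained in $\Delta_M^{(k)}$ (so that $\dim\tau_i\le k$), with $F(\tau_1)\cap\dots\cap F(\tau_r)\ne\emptyset$.

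It then remains to push the partition down. Setting $\sigma_i:=\lambda_\Delta(\tau_i)$, I get $\dim\sigma_i\le\dim\tau_i\le k$ and $f(\sigma_i)=f(\lambda_\Delta(\tau_i))=(f\circ\lambda_\Delta)(\tau_i)=F(\tau_i)$, so $f(\sigma_1)\cap\dots\cap f(\sigma_r)=F(\tau_1)\cap\dots\cap F(\tau_r)\ne\emptyset$. The faces $\sigma_1,\dots,\sigma_r$ are $j$-wise disjoint by pigeonhole: if a vertex $v\in\{0,\dots,N\}$ belonged to $\sigma_{i_1},\dots,\sigma_{i_j}$ for pairwise distinct $i_1,\dots,i_j$, then each $\tau_{i_\ell}$ would contain a vertex of $\lambda^{-1}(v)$, and since the $\tau_i$ are pairwise disjoint these would be $j$ distinct vertices inside $\lambda^{-1}(v)$, which has at most $j-1$ elements --- impossible. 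This is the desired conclusion.

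The only step carrying any real content is the choice of $M$: one must recognize that, under the substitution $M+1=(N+1)(j-1)-(r-1)m$, the two halves of \eqref{eqn:SarkariaVolovikov-condition} are \emph{precisely} the statements ``$\Delta_M^{(k)}$ is Tverberg unavoidable'' and ``$M\ge N_1$'', and that the auxiliary parameter $m$ is exactly the amount of slack one is free to shift between the room needed for the dimension bound and the room needed to run Lemma~\ref{lemma:unavoidable}. Everything else --- existence of $\lambda$, the behaviour of $\lambda_\Delta$ on faces, and the pigeonhole step --- is routine. Since the topological Tverberg theorem enters only through Lemma~\ref{lemma:unavoidable}, the prime-power hypothesis on $r$ is used only there; for affine $f$ one could replace that input by Remark~\ref{remark:affine} and drop the hypothesis entirely.
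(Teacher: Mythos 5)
Your proof is correct and takes essentially the same route as the paper: the paper obtains this statement from its sharpened Theorem~\ref{theorem:j-wise_disj_bounded_dimensions} (after noting that \eqref{eqn:SarkariaVolovikov-condition} forces $k\ge\tfrac{r-1}{r}d$ and $N+1>\tfrac{r-1}{j-1}(d+2)$), whose proof likewise pulls $f$ back along a vertex-collapsing projection $\Delta_{(N+1)(j-1)-1}\cong\Delta_N^{*(j-1)}\to\Delta_N$ and uses the unavoidable $k$-skeleton through Lemma~\ref{lemma:unavoidable}, with the same pigeonhole argument for $j$-wise disjointness. Your only deviation is bookkeeping: you retain $m$ and work on $\Delta_M$ with $M=(N+1)(j-1)-(r-1)m-1$ and fibers of size at most $j-1$, whereas the paper in effect sets $m=0$ and uses fibers of size exactly $j-1$.
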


Let us discuss which of the conditions posed by (\ref{eqn:SarkariaVolovikov-condition}) 
are necessary. First, the left-hand-side has to be strictly larger than the 
right-hand-side, which yields the condition $(m+1)(r-1)+r(k+1) - (m+d+2)(r-1)>0$, that is,
\begin{equation}\label{eqn:condition1}
    k\ge \tfrac{r-1}{r}d. 
\end{equation}
This lower bound on $k$ is necessary, as we see by looking
at a generic affine map $f$, which does not have the desired Tverberg $r$-partition unless
the sum of the codimensions of the $\sigma_i$ is at most $d$, that is, $r(d-k)\le d$.  

If the second inequality in (\ref{eqn:SarkariaVolovikov-condition}) is satisfied for
some $m\ge0$, then it is in particular satisfied for $m = 0$, which gives $(N+1)(j-1) > (r-1)(d+2)$, that is,
\begin{equation}\label{eqn:condition2}
    N+1>\tfrac{r-1}{j-1}(d+2). 
\end{equation} 
It is not clear whether this lower bound on $N$
is necessary in general; 
\[
    N+1 > \lfloor\tfrac{r-1}{j-1}\rfloor(d+2)
\]
is necessary for $k<d$,
as one can see from an affine map $\Delta_N\rightarrow\Delta_d$ that maps at most $\lfloor\frac{r-1}{j-1}\rfloor$
vertices of $\Delta_N$ to each of the vertices and to the barycenter of a $d$-simplex. 
This example is suggested by Sarkaria in~\cite{sarkaria:generalized_van-kampen_flores}.
Note that for $j=2$ the lower bound of (\ref{eqn:condition2}) reads $N+1>(r-1)(d+2)$,
which \emph{is} optimal, despite a mistaken claim  
in \cite[Thm.~1.5 and the sentence after this]{sarkaria:generalized_van-kampen_flores} 
that the bound $N\ge r(s+1)-2$ is optimal, where $s=k+1$ in Sarkaria's notation.
In the example he gives, the two bounds coincide.

Even if both conditions (\ref{eqn:condition1}) and (\ref{eqn:condition2}) hold
the integer $m$ that should satisfy (\ref{eqn:SarkariaVolovikov-condition})
may not exist. This requirement is non-trivial, see Example \ref{example:integrality} below. It is 
not necessary, as we shall see.
 
We will get our strengthened version of Theorem~\ref{theorem:generalized_van-kampen_flores} as a direct consequence of the topological Tverberg theorem.
For this we first establish the case $j = 2$ as a corollary of Lemma~\ref{lemma:unavoidable}. 
 
\begin{theorem}  \label{theorem:bounded_dimensions}%
  Let $r\ge2$ be a prime power, $d \ge 1$, $N\ge N_1 = (r-1)(d+2)$, 
  and $k \ge \lceil\frac{r-1}{r}d\rceil$. 
  then for every continuous map $f : \Delta_N \to \IR^d$ there are $r$ pairwise disjoint faces $\sigma_1, \dots, \sigma_r$ 
  of~$\Delta_N$, with $\dim\sigma_i \le k$ for $1\le i\le r$, 
  such that $f(\sigma_1) \cap \dots \cap f(\sigma_r) \neq \emptyset$.
\end{theorem}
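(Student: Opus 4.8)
The plan is to derive this from the two key lemmas of Section~\ref{sec:T-unavoidable-subcomplexes}, using the $k$-skeleton as the Tverberg unavoidable subcomplex, after first reducing to the minimal value of $N$.

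First I would reduce to the case $N = N_1 = (r-1)(d+2)$. For $N > N_1$, fix a subsimplex $\Delta_{N_1} \subseteq \Delta_N$ spanned by any $N_1+1$ of the vertices of $\Delta_N$; then any $r$ pairwise disjoint faces of $\Delta_{N_1}$ of dimension at most $k$ whose $f$-images have a common point are automatically such faces of $\Delta_N$ as well, so it suffices to prove the statement for $f|_{\Delta_{N_1}}$. This reduction is genuinely needed: for $N$ much larger than $N_1$ the hypothesis $r(k+2) > N+1$ of Lemma~\ref{example:unavoidable}(\ref{example:bounded_dimensions}) fails, so the $k$-skeleton of $\Delta_N$ need not be Tverberg unavoidable, and one really wants to work inside the smallest simplex for which the topological Tverberg theorem still forces a Tverberg partition.

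So assume $N = N_1$. The crucial point is the arithmetic equivalence that $k \ge \lceil \frac{r-1}{r}d \rceil$ is the same as $r(k+2) > N_1 + 1$: since $N_1 + 1 = (r-1)d + 2r - 1$, the inequality $r(k+2) > N_1+1$ rearranges to $rk \ge (r-1)d$, i.e.\ $k \ge \frac{r-1}{r}d$, i.e.\ $k \ge \lceil \frac{r-1}{r}d \rceil$ because $k$ is an integer. Hence Lemma~\ref{example:unavoidable}(\ref{example:bounded_dimensions}) applies and the $k$-skeleton $\Delta_{N_1}^{(k)}$ is Tverberg unavoidable for $f$ --- note that $f$ does have a Tverberg $r$-partition at all by the topological Tverberg theorem, since $N_1 = (r-1)(d+2) \ge (r-1)(d+1)$ and $r$ is a prime power. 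Then I would apply Lemma~\ref{lemma:unavoidable} with the subcomplex $\Sigma = \Delta_{N_1}^{(k)}$ (its hypothesis $N_1 \ge (r-1)(d+2)$ holds with equality): this produces $r$ pairwise disjoint faces $\sigma_1, \dots, \sigma_r$, \emph{all} of them lying in $\Sigma = \Delta_{N_1}^{(k)}$, with $f(\sigma_1) \cap \dots \cap f(\sigma_r) \neq \emptyset$. Faces of the $k$-skeleton have dimension at most $k$, which is exactly the desired conclusion.

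I do not expect a real obstacle here beyond bookkeeping. The one step that needs a moment's care is that Lemma~\ref{example:unavoidable}(\ref{example:bounded_dimensions}) is not available for all $N \ge N_1$, which is why one must first restrict $f$ to an $N_1$-dimensional face; once that is done, the hypothesis $k \ge \lceil \frac{r-1}{r}d \rceil$ is precisely what makes the $k$-skeleton of $\Delta_{N_1}$ Tverberg unavoidable, and Key Lemma~\#2 upgrades ``unavoidable'' to ``contains a full Tverberg partition'' and finishes the argument.
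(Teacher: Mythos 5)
Your proposal is correct and follows essentially the same route as the paper: reduce to $N=N_1$, observe that $k\ge\lceil\tfrac{r-1}{r}d\rceil$ is exactly the condition $r(k+2)>N_1+1$ making the $k$-skeleton $\Delta_{N_1}^{(k)}$ Tverberg unavoidable by Lemma~\ref{example:unavoidable}(\ref{example:bounded_dimensions}), and then apply Lemma~\ref{lemma:unavoidable} to place a full Tverberg partition inside that skeleton. Your write-up merely makes explicit the arithmetic and the restriction step that the paper's two-line proof leaves implicit.
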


\begin{proof}
    It is sufficient to prove this for $N=N_1$.
  The $k$-skeleton $\Delta_{N_1}^{(k)}$ of $\Delta_{N_1}$ is Tverberg unavoidable 
  by Lemma~\ref{example:unavoidable}(\ref{example:bounded_dimensions}).
\end{proof}

\begin{example}
For $d = r = 3$ and $f$ an affine map, this theorem asserts that given eleven points in $\IR^3$, 
one can find three pairwise disjoint sets of three points whose convex hulls intersect.  
Ten points are not sufficient for this, as by the discussion above one needs more than $(r-1)(d+2) = 10$ points. 
(This solves a problem discussed by Matou\v{s}ek in \cite[Example 6.7.4]{matousek:borsuk-ulam2}.)
\end{example}

We now obtain our strengthening of Theorem~\ref{theorem:generalized_van-kampen_flores}  
as a corollary of Theorem~\ref{theorem:bounded_dimensions}, the special case $j=2$.

\begin{theorem}[(Generalized van Kampen--Flores, sharpened)] \label{theorem:j-wise_disj_bounded_dimensions}
  Let $r\ge2$ be a prime power, $2\le j\le r$, $d\ge1$, and $k\le N$ such that
  \begin{equation}\label{eqn:SarkariaVolovikov-reloaded}
      k\ge \tfrac{r-1}{r}d    
     \quad\textrm{and}\quad
      N+1>\tfrac{r-1}{j-1}(d+2).   
  \end{equation}
  Then for every continuous map $f : \Delta_N \to \IR^d$ there are $r$ $j$-wise disjoint faces 
  $\sigma_1, \dots, \sigma_r$ of~$\Delta_N$, with $\dim\sigma_i \le k$ for $1\le i\le r$,
   such that $f(\sigma_1) \cap \dots \cap f(\sigma_r) \neq \emptyset$.
\end{theorem}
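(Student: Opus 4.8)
The plan is to reduce Theorem~\ref{theorem:j-wise_disj_bounded_dimensions} to Theorem~\ref{theorem:bounded_dimensions} (the case $j=2$) by applying the latter in a \emph{smaller} effective dimension. The heuristic is that a $j$-wise disjoint family behaves like a pairwise disjoint family in a dimension reduced by a factor of roughly $\frac{1}{j-1}$: if faces $\sigma_1,\dots,\sigma_r$ are $j$-wise disjoint (no $j$ of them share a vertex), then each vertex of $\Delta_N$ lies in at most $j-1$ of the $\sigma_i$, so the ``multiplicity-counted'' vertex set of the family has size at most $(j-1)(N+1)$. Thus I would like to encode a $j$-wise disjoint Tverberg configuration on $\Delta_N$ as a genuinely pairwise disjoint Tverberg configuration on a larger simplex $\Delta_M$ mapping into $\IR^{d}$, where $M$ is chosen so that the pigeonhole/unavoidability bounds of Theorem~\ref{theorem:bounded_dimensions} apply with the same $k$.

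Concretely, first I would set up the combinatorial translation: take $j-1$ disjoint copies of the vertex set of $\Delta_N$, giving a simplex $\Delta_M$ with $M+1=(j-1)(N+1)$ vertices, together with the simplicial ``diagonal-like'' surjection $\pi$ that sends each of the $j-1$ copies of a vertex $v$ back to $v$; compose the given $f:\Delta_N\to\IR^d$ with the induced affine (hence continuous) projection $\Delta_M\to\Delta_N$ to get $\hat f:\Delta_M\to\IR^d$. Second, I would check that the numerical hypotheses of Theorem~\ref{theorem:bounded_dimensions} hold for $\hat f$ at the same $k$: the second inequality of \eqref{eqn:SarkariaVolovikov-reloaded}, $N+1>\frac{r-1}{j-1}(d+2)$, is exactly $M+1=(j-1)(N+1)>(r-1)(d+2)=N_1+1$, so $M\ge N_1$; and the bound $k\ge\lceil\frac{r-1}{r}d\rceil$ is carried over verbatim (using the first inequality of \eqref{eqn:SarkariaVolovikov-reloaded} and $k\le N\le M$). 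Third, apply Theorem~\ref{theorem:bounded_dimensions} to $\hat f$ to obtain $r$ pairwise disjoint faces $\hat\sigma_1,\dots,\hat\sigma_r$ of $\Delta_M$ with $\dim\hat\sigma_i\le k$ and $\hat f(\hat\sigma_1)\cap\dots\cap\hat f(\hat\sigma_r)\neq\emptyset$. Fourth, push forward: let $\sigma_i=\pi(\hat\sigma_i)\subseteq\Delta_N$. Then $\dim\sigma_i\le\dim\hat\sigma_i\le k$, and $f(\sigma_i)\supseteq\hat f(\hat\sigma_i)$ componentwise in a way that preserves the common intersection point (one needs to track a point of Tverberg coincidence through $\pi$, which is immediate since $\pi$ maps the carrier of $x_i\in\hat\sigma_i$ into $\sigma_i$). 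Finally, because the $\hat\sigma_i$ are pairwise disjoint in $\Delta_M$ and each vertex of $\Delta_N$ has only $j-1$ preimages under $\pi$, any vertex $v$ of $\Delta_N$ can lie in at most $j-1$ of the images $\sigma_i$; hence $\sigma_1,\dots,\sigma_r$ are $j$-wise disjoint, as required.

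The main obstacle — and the step that needs to be executed with care rather than hand-waved — is the last one: verifying that pairwise disjointness upstairs translates \emph{exactly} into $j$-wise disjointness downstairs, and in particular that one cannot do better than $j$-wise (which matches the claimed statement) and that the argument does not secretly require a choice of how the $r$ faces distribute among the $j-1$ copies. The clean formulation is: each of the $N+1$ vertices $v$ of $\Delta_N$ has exactly $j-1$ lifts, and a pairwise disjoint family $\{\hat\sigma_i\}$ can use each lift at most once, so $v$ appears in at most $j-1$ of the $\sigma_i$ — this is precisely the defining property of $j$-wise disjointness. Secondary, but routine, is checking that the intersection point survives the projection: since $\hat f=f\circ\pi$, any common point $y=\hat f(x_1)=\dots=\hat f(x_r)$ with $x_i\in\hat\sigma_i$ satisfies $y=f(\pi(x_i))$ with $\pi(x_i)\in\sigma_i$, so $y\in f(\sigma_1)\cap\dots\cap f(\sigma_r)$. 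No prime-power restriction beyond the one already present in Theorem~\ref{theorem:bounded_dimensions} is introduced, and the hypotheses \eqref{eqn:SarkariaVolovikov-reloaded} are used in full and nothing more — in particular the integrality side condition on $m$ from \eqref{eqn:SarkariaVolovikov-condition} is avoided, which is the promised strengthening.
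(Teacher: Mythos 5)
Your proposal is correct and is essentially identical to the paper's proof: the paper also forms the join $\Delta_{N'}\cong\Delta_N^{*(j-1)}$ with $N'+1=(j-1)(N+1)$, composes $f$ with the simplicial projection collapsing the $j-1$ copies of each vertex, applies Theorem~\ref{theorem:bounded_dimensions} (using that the second inequality in \eqref{eqn:SarkariaVolovikov-reloaded} gives $N'\ge(r-1)(d+2)$), and pushes the pairwise disjoint faces forward, noting the dimension bound is preserved and that $j-1$ lifts per vertex yield $j$-wise disjointness. Your extra care in tracking the coincidence point and in spelling out why pairwise disjointness upstairs gives exactly $j$-wise disjointness downstairs only makes explicit what the paper leaves implicit.
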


\begin{proof}
	Let $N' := (N+1)(j-1)-1$, let $p$ be the natural simplicial projection $\Delta_{N'} \cong \Delta_N^{*(j-1)} \to \Delta_N$ 
	that maps each of the $j-1$ copies of a vertex $v \in \Delta_N$ in the join $\Delta_N^{*(j-1)}$ to the vertex $v$,
	and set $f' := f \circ p : \Delta_{N'} \to \IR^d$. 
	
	We have $N' \ge (r-1)(d+2)$. Thus by Theorem \ref{theorem:bounded_dimensions} 
	there are pairwise disjoint faces $\sigma'_1, \dots, \sigma'_r \subseteq \Delta_{N'}$, 
	with  $\dim \sigma_i \le k$ for all $i$, 
	such that $f'(\sigma'_1) \cap \dots \cap f'(\sigma'_r) \neq \emptyset$. 
	By definition of~$f'$ this is equivalent to $f(p(\sigma'_1)) \cap \dots \cap f(p(\sigma'_r)) \neq \emptyset$. 
	Now, let $\sigma_1 = p(\sigma'_1), \dots, \sigma_r = p(\sigma'_r)$. 
	The faces $p(\sigma'_i)$ of $\Delta_N$ still satisfy the dimension bound $\dim p(\sigma'_i) \le k$, and they are $j$-wise disjoint. 
\end{proof}

\begin{example}\label{example:integrality}
	To see that Theorem~\ref{theorem:j-wise_disj_bounded_dimensions} is stronger than Theorem \ref{theorem:generalized_van-kampen_flores}, let $d = j = r = 3$ and $k = 2$. 
	Then the prerequisites of Theorem \ref{theorem:j-wise_disj_bounded_dimensions} 
	are satisfied for $N = 5$. Thus for any continuous map $f : \Delta_5 \to \IR^3$ 
	there are three $3$-wise disjoint faces of dimension at most~$2$ whose images intersect.
	
	However, inequality (\ref{eqn:SarkariaVolovikov-condition}) of Theorem \ref{theorem:generalized_van-kampen_flores} 
	asks that $(m+1)\cdot 2 + 3\cdot 3 \ge (N+1) \cdot 2 > \break (m+5)\cdot 2$, that is, $2m+11 \ge 2N+2 > 2m +10$. 
	Such an integer $m$ exists for no $N$, as $2N+2$ is even.
\end{example}

One can ask for a further strengthening of Theorem \ref{theorem:j-wise_disj_bounded_dimensions} where       
we would not put the same dimension bound on all simplices~$\sigma_i$. From Lemma ~\ref{example:unavoidable}(\ref{example:bounded_dimensions_non_uniform})
we get the following.

\begin{theorem}[(Generalized van Kampen--Flores, sharpened further)] \label{theorem:bounded_dimensions_non-uniform}
  Let $r\ge2$ be a prime power, $d \ge 1$, $N\ge (r-1)(d+2)$, and 
  \[
  r(k+1)+s> N + 1\quad \text{ for integers }k\ge0\text{ and }0\le s < r.
  \] 
  then for every continuous map $f : \Delta_N \to \IR^d$ there are $r$ pairwise disjoint faces 
  $\sigma_1,\dots,\sigma_r$ of $\Delta_N$ such that 
  $f(\sigma_1) \cap \dots \cap f(\sigma_r) \neq \emptyset$, $\dim(\sigma_i)\le k$ for all $i$, and 
  the number $\ell$ of simplices $\sigma_i$ with $\dim\sigma_i=k$ satisfies $\ell(k+1)\le N-(r-s)+1$.
\end{theorem}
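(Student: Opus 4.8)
The plan is to mimic the proofs of Theorems~\ref{theorem:bounded_dimensions} and \ref{theorem:j-wise_disj_bounded_dimensions}, now feeding in the non-uniform unavoidable subcomplex from Lemma~\ref{example:unavoidable}(\ref{example:bounded_dimensions_non_uniform}) in place of the $k$-skeleton. First I would reduce to the case $N = (r-1)(d+2) = N_1$: a larger $N$ only makes the hypothesis $r(k+1)+s > N+1$ harder to satisfy, but if we have a Tverberg partition with the stated dimension and counting properties inside a sub-simplex $\Delta_{N_1}\subseteq\Delta_N$, the same faces work in $\Delta_N$. Then I would invoke Lemma~\ref{example:unavoidable}(\ref{example:bounded_dimensions_non_uniform}) with the given $k$ and $s$: since $r(k+1)+s > N_1+1$, the subcomplex $\Sigma := \Delta_{N_1}^{(k-1)}\cup\Delta_{N_1-(r-s)}^{(k)}$ is Tverberg unavoidable for any continuous $f$. (Here $\Delta_{N_1-(r-s)}$ is the induced sub-simplex on the first $N_1-(r-s)+1$ vertices.)

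Next, since $N_1 = (r-1)(d+2) = N_1$ in the notation of Lemma~\ref{lemma:unavoidable}, that lemma applies: there is a Tverberg $r$-partition $\{\sigma_1,\dots,\sigma_r\}$ for $f$ with \emph{all} faces $\sigma_i$ lying in $\Sigma$. It remains to read off the two asserted properties from membership in $\Sigma$. Every face of $\Sigma$ has dimension at most $k$, giving $\dim\sigma_i\le k$ for all $i$. For the counting statement, let $\ell$ be the number of $\sigma_i$ with $\dim\sigma_i = k$ exactly. A face of dimension $k$ cannot lie in $\Delta_{N_1}^{(k-1)}$, so each such $\sigma_i$ lies in $\Delta_{N_1-(r-s)}^{(k)}$, i.e. uses only vertices among the first $N_1-(r-s)+1$. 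Since the $\sigma_i$ are pairwise disjoint, these $\ell$ faces together use at least $\ell(k+1)$ distinct vertices, all drawn from that vertex set of size $N_1-(r-s)+1$; hence $\ell(k+1)\le N_1-(r-s)+1 = N-(r-s)+1$ after the reduction, which is exactly the claimed bound (and in the original $\Delta_N$ the bound is $N-(r-s)+1$ as stated, since we only enlarge the ambient simplex).

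The only genuinely delicate point is the bookkeeping in choosing which vertices of $\Delta_N$ play the role of the ``last $r-s$ vertices'' versus those spanning the sub-simplex $\Delta_{N-(r-s)}$, and making sure the reduction to $N = N_1$ is compatible with the specific induced sub-simplex appearing in Lemma~\ref{example:unavoidable}(\ref{example:bounded_dimensions_non_uniform}). This is pure relabelling: one fixes an arbitrary $(N_1+1)$-element subset $V$ of the vertices of $\Delta_N$, applies everything inside $\Delta_{N_1} = \Delta_N[V]$, and then transports the resulting faces back. No real obstacle arises because Tverberg unavoidability and the dimension/counting conclusions are all statements about faces and vertex counts, which are preserved under such inclusions of simplices. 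So the proof is essentially a three-line application of Lemmas~\ref{example:unavoidable}(\ref{example:bounded_dimensions_non_uniform}) and \ref{lemma:unavoidable} followed by the pigeonhole reading-off of $\ell(k+1)\le N-(r-s)+1$.

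\begin{proof}
    We may assume that $N = N_1 = (r-1)(d+2)$, since a Tverberg partition with the asserted properties inside an induced sub-simplex $\Delta_{N_1}\subseteq\Delta_N$ also works in $\Delta_N$, and the hypothesis $r(k+1)+s>N+1$ only becomes more restrictive as $N$ grows.
    By Lemma~\ref{example:unavoidable}(\ref{example:bounded_dimensions_non_uniform}) the subcomplex
    $\Sigma := \Delta_{N_1}^{(k-1)}\cup\Delta_{N_1-(r-s)}^{(k)}$ of $\Delta_{N_1}$ is Tverberg unavoidable for~$f$, where $\Delta_{N_1-(r-s)}$ is the induced sub-simplex on the first $N_1-(r-s)+1$ vertices. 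Since $N_1 = (r-1)(d+2)$, Lemma~\ref{lemma:unavoidable} applies and yields pairwise disjoint faces $\sigma_1,\dots,\sigma_r$ of $\Delta_{N_1}$, all contained in $\Sigma$, with $f(\sigma_1)\cap\dots\cap f(\sigma_r)\neq\emptyset$.
    Every face of $\Sigma$ has dimension at most $k$, so $\dim\sigma_i\le k$ for all $i$.
    Let $\ell$ be the number of indices $i$ with $\dim\sigma_i = k$. Such a $\sigma_i$ cannot lie in $\Delta_{N_1}^{(k-1)}$, so it lies in $\Delta_{N_1-(r-s)}^{(k)}$ and hence uses only vertices among the first $N_1-(r-s)+1$. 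As the $\sigma_i$ are pairwise disjoint, these $\ell$ faces together use at least $\ell(k+1)$ distinct vertices, all from a set of size $N_1-(r-s)+1 = N-(r-s)+1$. Therefore $\ell(k+1)\le N-(r-s)+1$.
\end{proof}
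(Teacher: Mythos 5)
Your proof is correct and follows essentially the same route as the paper: take the Tverberg unavoidable subcomplex $\Delta_N^{(k-1)}\cup\Delta_{N-(r-s)}^{(k)}$ from Lemma~\ref{example:unavoidable}(\ref{example:bounded_dimensions_non_uniform}), apply Lemma~\ref{lemma:unavoidable}, and read off the dimension bound and the count of $k$-dimensional faces. Your preliminary reduction to $N=N_1$ is harmless but unnecessary (the paper argues directly in $\Delta_N$, since Lemma~\ref{lemma:unavoidable} only needs $N\ge(r-1)(d+2)$), and your explicit pigeonhole derivation of $\ell(k+1)\le N-(r-s)+1$ just spells out what the paper states tersely.
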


\begin{proof} 
  The complex $\Delta_N^{(k-1)}\cup\Delta_{N-(r-s)}^{(k)}$ is Tverberg unavoidable by Lemma~\ref{example:unavoidable}(\ref{example:bounded_dimensions_non_uniform}).
  Thus there is a Tverberg partition $\sigma_1,\dots,\sigma_r$ with all simplices in the unavoidable subcomplex, so we get $\dim\sigma_i\le k$.
  Moreover, the simplices $\sigma_i$ altogether can take up only
  $N+1\le r(k+1)+s$ vertices. 
\end{proof}

We leave it to the reader to state and prove a $j$-wise disjoint version of Theorem~\ref{theorem:bounded_dimensions_non-uniform}.
On the other hand even for $j=2$ we do not, up to now, seem to get the full result that one could hope for: 

\begin{conjecture} \label{conjecture:bounded_dimensions_non-uniform}
  Let $r\ge2$ be a prime power, $d \ge 1$, $N\ge (r-1)(d+2)$, and 
  \[
  r(k+1)+s> N + 1\quad \text{ for integers }k\ge0\text{ and }0\le s<r.
  \] 
  then for every continuous map $f : \Delta_N \to \IR^d$ there are $r$ pairwise disjoint faces $\sigma_1,\dots,\sigma_r$ of~$\Delta_N$ such that 
  $f(\sigma_1) \cap \dots \cap f(\sigma_r) \neq \emptyset$, with 
  $\dim\sigma_i\le k+1$ for $1\le i\le s$  and  
  $\dim\sigma_i\le k$\break   for $s <  i\le r$.%
\end{conjecture}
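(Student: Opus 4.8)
The plan is to proceed exactly as in the proof of Theorem~\ref{theorem:bounded_dimensions_non-uniform}, that is, to exhibit a Tverberg unavoidable subcomplex that forces the non-uniform dimension bounds, and then invoke Lemma~\ref{lemma:unavoidable} (the black-box reduction via a distance constraint). Concretely, I would single out $r-s$ of the vertices of $\Delta_N$---call this set $T$ with $|T|=r-s$---and consider the subcomplex
\[
\Sigma\ :=\ \bigl\{\tau\subseteq\Delta_N \ :\ \dim\tau\le k,\ \text{and if }\dim\tau=k+1\text{ then }\tau\cap T\neq\emptyset\bigr\}.
\]
Wait---that does not quite match the desired conclusion, since the conjecture wants \emph{at most $s$} faces of dimension $k+1$, one naturally associated to each of the $s$ ``extra'' slots in the count $r(k+1)+s$. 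So the right subcomplex should instead be the one whose faces of dimension $>k$ are confined to a prescribed $(k+1)$-set, or more precisely: fix $s$ disjoint $(k+1)$-element subsets $U_1,\dots,U_s$ of the vertex set, plus one more $(k+1)$-set, and let $\Sigma$ be the complex of faces $\tau$ with $\dim\tau\le k+1$, where additionally any $\tau$ with $\dim\tau=k+1$ must be one of these prescribed blocks. The pigeonhole count $r(k+1)+s>N+1$ should then show $\Sigma$ is Tverberg unavoidable.

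The key steps, in order, would be: \emph{(1)} Choose the combinatorial data: since $r(k+1)+s>N+1$, we can partition (most of) the $N+1$ vertices of $\Delta_N$ into $r$ blocks, $s$ of them of size $k+1$ and $r-s$ of them of size $k+1$ as well but with a deficit of $r(k+1)+s-(N+1)\ge1$ vertices total, so that at least one block has size $\le k$; more carefully, arrange $r-s$ blocks of size $\le k$ and $s$ blocks of size $\le k+1$. \emph{(2)} Define $\Sigma$ to be the subcomplex consisting of all faces $\tau$ such that $\tau$ is contained in the union of these blocks and $\tau$ meets each of the $r-s$ ``small'' blocks in at most $k$ vertices---no, this is getting complicated. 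The honest description is: $\Sigma$ consists of all faces of dimension $\le k-1$, together with all faces of dimension $\le k$ or $k+1$ that are contained in the ``privileged'' vertex set $V'=V(\Delta_N)\setminus(r-s\text{ chosen vertices})$ in a controlled way. \emph{(3)} Verify Tverberg unavoidability: given a Tverberg partition $\sigma_1,\dots,\sigma_r$, suppose none lies in $\Sigma$; deduce lower bounds on the $\dim\sigma_i$ that force at least $r(k+1)+s$ vertices in total, contradicting $N+1<r(k+1)+s$. \emph{(4)} Apply Lemma~\ref{lemma:unavoidable} (legitimate since $N\ge(r-1)(d+2)=N_1$) to conclude that some Tverberg partition lies entirely inside $\Sigma$, and read off the dimension bounds: at most $s$ faces of dimension $k+1$, the rest of dimension $\le k$.

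The hard part will be step~(2)--(3): finding a \emph{single} subcomplex $\Sigma$ that is genuinely Tverberg unavoidable and whose faces automatically satisfy ``$\dim\sigma_i\le k+1$ for at most $s$ values of $i$ and $\dim\sigma_i\le k$ otherwise.'' The obstacle is exactly the one the authors flag: a pure pigeonhole argument on vertex counts tells you that \emph{not all} $\sigma_i$ can be large, but it does not by itself pin down \emph{which} faces are allowed to be $(k+1)$-dimensional, because Tverberg unavoidability is about containing one face of the partition, not about simultaneously constraining all of them via a single subcomplex. Part~(\ref{example:bounded_dimensions_non_uniform}) of Lemma~\ref{example:unavoidable} gives the ``hybrid'' subcomplex $\Delta_N^{(k-1)}\cup\Delta_{N-(r-s)}^{(k)}$, which forces $\dim\sigma_i\le k$ for all $i$ and only bounds the \emph{number} $\ell$ of $k$-faces---that is precisely Theorem~\ref{theorem:bounded_dimensions_non-uniform}, and it is weaker than the conjecture. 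To get the conjecture one would likely need to iterate the constraint-function trick with \emph{several} distance functions (one for each of $s$ different ``which face is big'' complexes), applying Theorem~\ref{theorem:multiple_unavoidable} rather than Lemma~\ref{lemma:unavoidable}; but then the bookkeeping of how many extra dimensions $c$ one pays, versus the budget $N+1\ge(r-1)(d+1+c)$ permits, seems not to close---which is exactly why this is stated as a conjecture and not a theorem. I would therefore present the above as an \emph{attempt} and honestly record where the vertex-counting slack is lost, rather than claim a complete proof.

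\begin{proof}[Remark on the status]
We do not have a proof; the discussion above explains the obstruction. The natural pigeonhole/unavoidability machinery of Sections~\ref{sec:constraining_function}--\ref{sec:T-unavoidable-subcomplexes} yields Theorem~\ref{theorem:bounded_dimensions_non-uniform}, but the sharper allocation of the $(k+1)$-dimensional faces to $s$ prescribed indices appears to require a subcomplex that is not Tverberg unavoidable in the sense needed, and the multi-constraint route of Theorem~\ref{theorem:multiple_unavoidable} overspends the dimension budget $N+1\ge (r-1)(d+1+c)$.
\end{proof}
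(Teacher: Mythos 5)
You were right not to manufacture a proof here, because the paper itself offers none: this statement is deliberately left as a \emph{conjecture}, placed immediately after Theorem~\ref{theorem:bounded_dimensions_non-uniform} with the remark that ``even for $j=2$ we do not, up to now, seem to get the full result that one could hope for.'' Your diagnosis of the obstruction also matches the authors' implicit reasoning: a single Tverberg unavoidable subcomplex, upgraded via Lemma~\ref{lemma:unavoidable}, can only force \emph{all} faces of some partition into that subcomplex, hence can bound the maximal dimension and (by counting vertices) the \emph{number} of top-dimensional faces, but it cannot allocate \emph{which} faces are allowed to be large; and stacking several constraints via Theorem~\ref{theorem:multiple_unavoidable} costs $c$ extra dimensions against the budget $N+1\ge(r-1)(d+1+c)$, which does not close. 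That is precisely why the machinery of Sections~\ref{sec:constraining_function}--\ref{sec:T-unavoidable-subcomplexes} stops at Theorem~\ref{theorem:bounded_dimensions_non-uniform}.

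One substantive correction, though: your assertion that Theorem~\ref{theorem:bounded_dimensions_non-uniform} ``is weaker than the conjecture'' is backwards for the statement \emph{as literally printed}. The conjecture has exactly the same hypotheses as Theorem~\ref{theorem:bounded_dimensions_non-uniform} ($r$ a prime power, $N\ge(r-1)(d+2)$, $r(k+1)+s>N+1$, $0\le s<r$), and that theorem already yields a Tverberg partition with $\dim\sigma_i\le k$ for \emph{all} $i$, which trivially satisfies ``$\dim\sigma_i\le k+1$ for $i\le s$ and $\dim\sigma_i\le k$ for $i>s$.'' So the printed statement has a one-line proof: cite Theorem~\ref{theorem:bounded_dimensions_non-uniform} and discard the count on $\ell$ --- an immediate argument you overlooked while searching for a new unavoidable subcomplex. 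The genuinely open strengthening, which is evidently what the authors intend (and what your attempt was aimed at), is the allocation in which only $s$ faces may have the \emph{top} dimension under the same budget, i.e.\ $\dim\sigma_i\le k$ for $i\le s$ and $\dim\sigma_i\le k-1$ for $s<i\le r$ (equivalently, the printed conclusion under the hypothesis $r(k+2)+s>N+1$). That version is exactly tight against the affine general-position requirement $\sum_i(d-\dim\sigma_i)\le d$ when $N=(r-1)(d+2)$ and $r(k+1)+s=N+2$, and it is what the $\ell$-bound $\ell(k+1)\le N-(r-s)+1$ of Theorem~\ref{theorem:bounded_dimensions_non-uniform} fails to deliver in general, since $\lfloor (N-(r-s)+1)/(k+1)\rfloor$ typically exceeds $s$; for that intended statement your conclusion --- no proof available by these methods --- agrees with the paper's own status.
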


More generally, Roland Bacher has asked on \texttt{mathoverflow} \cite{various:mathoverflow} which dimensions $d_i=\dim\sigma_i$
could be prescribed for a Tverberg partition if the number of points $N$ is sufficiently large.
We have already noted that a Tverberg $r$-partition in which the codimensions of the $\sigma_i$ add to more than $d$
will not exist for an affine general position map, so we need to assume that $\sum_i (d-d_i) \le d$.
Also arbitrarily large families of $N$ points on the moment curve, whose convex hulls are neighborly polytopes, show that we
cannot force that $\dim\sigma_i< \lfloor \frac d2\rfloor$ for any~$i$.

\begin{definition}[(admissible, Tverberg prescribable)]
For $d\ge1$ and $r\ge2$ an $r$-tuple $(d_1, \dots, d_r)$ of integers is \emph{admissible for $d$}
if  $\lfloor \frac{d}{2} \rfloor \le d_i \le d$ and $\sum_{i=1}^r (d-d_i) \le d$. An admissible $r$-tuple $(d_1, \dots, d_r)$ is \emph{Tverberg prescribable} if there is an $N$ such that for every continuous $f : \Delta_N \to \IR^d$ there is a Tverberg partition $\set{\sigma_1, \dots, \sigma_r}$ for $f$ with $\dim \sigma_i = d_i$.
\end{definition}

Theorem \ref{theorem:bounded_dimensions} shows that every admissible $r$-tuple of equal integers is Tverberg prescribable; see also Haase~\cite{haase:tverberg}. 
Also in the case $r = 2$ all admissible tuples $(d_1, d_2)$ are Tverberg prescribable. Indeed, 
for even $d$ this is given by the van Kampen--Flores Theorem \ref{thm:vanKampen-Flores}.
However, for odd $d$ we need a statement that is stronger than
what you get directly from Theorems~\ref{theorem:generalized_van-kampen_flores} or~\ref{theorem:bounded_dimensions}, 
namely, that there are two disjoint faces
of $\Delta_{d+2}$, both of them of dimension at most~$\lceil\frac d2\rceil$, whose images intersect.
The version that we need will be obtained from Theorem~\ref{theorem:bounded_dimensions_non-uniform}.

\begin{theorem}[(van Kampen--Flores, sharpened)]\label{thm:vanKampen-Flores_sharpened} 
	Let $d\ge1$.
	Then for every continuous map\break
	$f : \Delta_{d+2} \to \IR^d$ there are two disjoint faces $\sigma_1, \sigma_2$ of $\Delta_{d+2}$ such that 
	$\dim \sigma_1 = \lceil\frac d2\rceil$, $\dim \sigma_2 = \lfloor\frac d2\rfloor$, and $f(\sigma_1) \cap f(\sigma_2) \neq \emptyset$.
\end{theorem}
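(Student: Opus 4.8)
The plan is to deduce this from the uniform statements already available, by an elementary vertex-counting argument, treating $d$ even and $d$ odd separately. For $d$ even both prescribed dimensions equal $\frac{d}{2}$, so this is essentially the van Kampen--Flores Theorem~\ref{thm:vanKampen-Flores}; for $d$ odd I would invoke the non-uniform sharpening, Theorem~\ref{theorem:bounded_dimensions_non-uniform}, with $r=2$.

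First I would isolate an \emph{enlargement observation}: if $\sigma_1,\sigma_2$ are disjoint faces of $\Delta_{d+2}$ with $\dim\sigma_1\le a$, $\dim\sigma_2\le b$, $a+b=d$ and $f(\sigma_1)\cap f(\sigma_2)\neq\emptyset$, then there are disjoint faces $\tau_1\supseteq\sigma_1$, $\tau_2\supseteq\sigma_2$ with $\dim\tau_1=a$ and $\dim\tau_2=b$, and still $f(\tau_1)\cap f(\tau_2)\neq\emptyset$ since enlarging faces only enlarges their images. Indeed, writing $p=\dim\sigma_1\le a$, $q=\dim\sigma_2\le b$, the union $\sigma_1\cup\sigma_2$ uses $p+q+2\le d+2$ of the $d+3$ vertices of $\Delta_{d+2}$, leaving $d+1-p-q$ unused vertices, while only $(a-p)+(b-q)=d-p-q\ge 0$ new vertices are required; since $d+1-p-q\ge d-p-q$, one can add them one at a time, assigning each to whichever face still needs vertices, keeping the two faces disjoint.

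For $d$ even I would apply Theorem~\ref{thm:vanKampen-Flores} to obtain disjoint faces of dimension at most $\frac{d}{2}$ with intersecting images, and then apply the enlargement observation with $a=b=\frac{d}{2}$. For $d$ odd I would apply Theorem~\ref{theorem:bounded_dimensions_non-uniform} with $r=2$, $N=d+2$, $k=\frac{d+1}{2}$ and $s=1$: the requirements $N\ge(r-1)(d+2)$ and $0\le s<r$ hold, and $r(k+1)+s=(d+3)+1>d+3=N+1$, so the theorem applies. It yields disjoint faces $\sigma_1,\sigma_2$ with $\dim\sigma_i\le\frac{d+1}{2}$, $f(\sigma_1)\cap f(\sigma_2)\neq\emptyset$, and such that the number $\ell$ of them with dimension exactly $\frac{d+1}{2}$ satisfies $\ell\cdot\frac{d+3}{2}\le N-(r-s)+1=d+2$; since $2\cdot\frac{d+3}{2}=d+3>d+2$, this forces $\ell\le 1$. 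Relabeling if necessary, $\dim\sigma_1\le\frac{d+1}{2}=\lceil\frac{d}{2}\rceil$ and $\dim\sigma_2\le\frac{d-1}{2}=\lfloor\frac{d}{2}\rfloor$, and the enlargement observation with $a=\lceil\frac{d}{2}\rceil$, $b=\lfloor\frac{d}{2}\rfloor$ finishes the proof.

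The step I expect to be the real obstacle is choosing the parameters in Theorem~\ref{theorem:bounded_dimensions_non-uniform} in the odd case: one must take the \emph{upper} bound $k=\lceil\frac{d}{2}\rceil$ rather than $\lfloor\frac{d}{2}\rfloor$, since with the smaller value the hypothesis $r(k+1)+s>N+1$ would demand $N<d+2$, impossible for a map on $\Delta_{d+2}$; and one must take $s=1$, the largest value allowed by $s<r=2$, both to satisfy that hypothesis and to keep $\ell(k+1)\le N-(r-s)+1$ forcing $\ell\le 1$. Everything else---the counting behind the enlargement observation, and the fact that enlarging the faces preserves the nonempty intersection of images---is routine.
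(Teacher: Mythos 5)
Your proof is correct and follows essentially the same route as the paper: the even case is exactly the van Kampen--Flores Theorem~\ref{thm:vanKampen-Flores}, and the odd case applies Theorem~\ref{theorem:bounded_dimensions_non-uniform} with the same parameters $r=2$, $k=\lceil\frac d2\rceil$, $s=1$, forcing $\ell\le 1$. The only difference is that you make explicit the routine face-enlargement step promoting the dimension bounds to equalities, which the paper leaves implicit.
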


\begin{proof}
    It remains to settle the case when $d$ is odd, with $\dim \sigma_1 = \frac{d-1}{2}$ and $\dim \sigma_2 = \frac{d+1}{2}$.
    In terms of Theorem~\ref{theorem:bounded_dimensions_non-uniform}, in this situation we have $r=2$, $k=\frac{d+1}2$, $s=1$, and thus
    there is a Tverberg $2$-partition $\sigma_1,\sigma_2$ with $\dim \sigma_i\le\frac{d+1}2$,
    where at most $\ell\le\big\lfloor\frac{N-(r-s)+1}{k+1}\big\rfloor=\big\lfloor\frac{(d+2)-(2-1)+1}{\frac{d+1}2+1}\big\rfloor=1$ 
    of the $\sigma_i$ have dimension~$k=\frac{d+1}2$.
\end{proof}

For $d=3$, this says that for every continuous map $\Delta_5\rightarrow\IR^3$, the images of a triangle and an edge of~$\Delta_5$ intersect. This is equivalent to the Conway--Gordon--Sachs theorem \cite{conway-gordon-intrinsically-linked} \cite{sachs-intrinsically-linked} from graph theory, which says that the complete graph $K_6$ (that is,
the $1$-skeleton of $\Delta_5$) is ``intrinsically linked''. 

\begin{question}
	Is every admissible $r$-tuple Tverberg prescribable?
\end{question}

\section{{j}-wise disjoint Tverberg partitions}\label{sec:j-wise}

The following result is a version of Theorem~\ref{theorem:generalized_van-kampen_flores}, without a bound on the dimensions of
the simplices $\sigma_i$ in the Tverberg partition. We state it here, and give a simpler proof, since 
 our methods will also yield a new affine version of this, which holds for all $r\ge2$.

\begin{theorem}[($j$-wise disjoint topological Tverberg: Sarkaria \cite{sarkaria:generalized_van-kampen_flores}, Volovikov \cite{volovikov:van-kampen_flores})]\label{thm:j-wise}
	Let $r\ge2$ be a prime power, $d \ge 1$, $2 \le j \le r$, and 
\begin{equation}
     N+1 > \tfrac{r-1}{j-1}(d+1).
\end{equation}
Then for every continuous map $f : \Delta_N \to \IR^d$ there are $j$-wise disjoint faces $\sigma_1, \dots, \sigma_r$ of~$\Delta_N$ such that $f(\sigma_1) \cap \dots \cap f(\sigma_r) \neq \emptyset$.
\end{theorem}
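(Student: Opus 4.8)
The plan is to deduce this from the ordinary topological Tverberg theorem (Theorem~\ref{thm:top_tverberg_thm}) by exactly the join trick already used in the proof of Theorem~\ref{theorem:j-wise_disj_bounded_dimensions}, only now we do not need the bounded-dimension refinement, so the plain black box suffices and the argument is shorter.

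Concretely, I would set $N' := (N+1)(j-1)-1$ and use the canonical identification $\Delta_{N'} \cong \Delta_N^{*(j-1)}$, writing $p : \Delta_{N'} \to \Delta_N$ for the simplicial projection that sends each of the $j-1$ copies of a vertex $v$ of $\Delta_N$ in the join to $v$ itself, and $f' := f \circ p : \Delta_{N'} \to \IR^d$. The hypothesis $N+1 > \tfrac{r-1}{j-1}(d+1)$, multiplied by $j-1 \ge 1$ and combined with integrality, gives $(N+1)(j-1) \ge (r-1)(d+1)+1$, i.e.\ $N' \ge (r-1)(d+1)$. Hence Theorem~\ref{thm:top_tverberg_thm} applies to $f'$ and yields pairwise disjoint faces $\sigma'_1, \dots, \sigma'_r$ of $\Delta_{N'}$ with $f'(\sigma'_1) \cap \dots \cap f'(\sigma'_r) \neq \emptyset$. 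Put $\sigma_i := p(\sigma'_i)$; these are faces of $\Delta_N$, and $f(\sigma_i) = f(p(\sigma'_i)) = f'(\sigma'_i)$, so their images still have a common point.

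It remains to check $j$-wise disjointness of $\sigma_1, \dots, \sigma_r$. Suppose some vertex $v$ of $\Delta_N$ lay in $j$ of these faces, say in $\sigma_{i_1}, \dots, \sigma_{i_j}$. Then each $\sigma'_{i_\ell}$ contains at least one of the $j-1$ copies of $v$ in $\Delta_N^{*(j-1)}$; choosing one such copy for every $\ell$ and applying the pigeonhole principle, two of the faces $\sigma'_{i_\ell}$ contain the same copy of $v$, contradicting pairwise disjointness of the $\sigma'_i$. Thus no $j$ of the $\sigma_i$ share a vertex, i.e.\ they are $j$-wise disjoint, and the proof is complete.

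There is essentially no obstacle here: the only points requiring a moment's care are that the strict inequality in the hypothesis translates, via integrality, precisely into $N' \ge (r-1)(d+1)$, which is exactly what is needed to invoke Theorem~\ref{thm:top_tverberg_thm}, and the pigeonhole step for $j$-wise disjointness. I would also remark that since $p$ is simplicial, an affine $f$ forces $f'$ affine, so replacing Theorem~\ref{thm:top_tverberg_thm} by the affine Tverberg theorem (valid for all $r\ge2$) the identical argument yields the promised affine version with no prime-power restriction on $r$.
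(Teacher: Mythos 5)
Your proof is correct and is essentially the paper's own argument: the same join construction $\Delta_{N'}\cong\Delta_N^{*(j-1)}$ with $N'=(N+1)(j-1)-1$, the projection $p$, and the black-box application of Theorem~\ref{thm:top_tverberg_thm} to $f\circ p$. The only difference is that you spell out the integrality step $N'\ge(r-1)(d+1)$ and the pigeonhole argument for $j$-wise disjointness, which the paper leaves implicit.
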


\begin{proof}
      For this we repeat the proof of Theorem \ref{theorem:j-wise_disj_bounded_dimensions} and use the topological Tverberg theorem, Theorem~\ref{thm:top_tverberg_thm}, in place of Theorem~\ref{theorem:bounded_dimensions}.
	Let $N' := (N+1)(j-1)-1$, let $p$ be the natural simplicial projection $\Delta_{N'} \cong \Delta_N^{*(j-1)} \to \Delta_N$ 
	that maps each of the $j-1$ copies of a vertex $v \in \Delta_N$ in the join $\Delta_N^{*(j-1)}$ to the vertex $v$,
	and set $f' := f \circ p : \Delta_{N'} \to \IR^d$. 
	
	We have $N' \ge (r-1)(d+1)$. Thus by the topological Tverberg theorem, Theorem~\ref{thm:top_tverberg_thm},
	there are pairwise disjoint faces $\sigma'_1, \dots, \sigma'_r \subseteq \Delta_{N'}$
	such that $f'(\sigma'_1) \cap \dots \cap f'(\sigma'_r) \neq \emptyset$. 
	By definition of $f'$ this is equivalent to $f(p(\sigma'_1)) \cap \dots \cap f(p(\sigma'_r)) \neq \emptyset$. 
	Now, let $\sigma_1 = p(\sigma'_1), \dots, \sigma_r = p(\sigma'_r)$. 
	The faces $p(\sigma'_i)$ of $\Delta_N$ are $j$-wise disjoint. 
\end{proof}

\begin{theorem}[($j$-wise disjoint Tverberg)]
	Let $r \ge 2$, $d \ge 1$, $2 \le j \le r$, and 
\begin{equation}
     N+1 > \tfrac{r-1}{j-1}(d+1).
\end{equation}
Then for every affine map $f : \Delta_N \to \IR^d$ there are $j$-wise disjoint faces 
$\sigma_1, \dots, \sigma_r$ of~$\Delta_N$ such that $f(\sigma_1) \cap \dots \cap f(\sigma_r) \neq \emptyset$.
\end{theorem}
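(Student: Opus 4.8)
The plan is to mimic the proof of Theorem~\ref{thm:j-wise} (the $j$-wise disjoint topological Tverberg theorem) verbatim, but to feed the resulting auxiliary map into Tverberg's \emph{original}, affine theorem rather than into the topological one. Since the affine Tverberg theorem carries no primality restriction on $r$ --- it is the $c=0$ case of Lemma~\ref{tverberg_lemma} in the affine setting, cf.\ Remark~\ref{remark:affine} --- the prime-power hypothesis simply will not appear.

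First I would set $N' := (N+1)(j-1)-1$, identify $\Delta_{N'}$ with the join $\Delta_N^{*(j-1)}$, and introduce the simplicial projection $p : \Delta_{N'}\to\Delta_N$ that collapses the $j-1$ copies of each vertex of $\Delta_N$ onto that vertex. The one point that needs checking --- and the only place where the affine setting really plays a role --- is that $p$ is itself an affine map. This is immediate: in barycentric coordinates $p$ just adds up, for each vertex $v$ of $\Delta_N$, the coordinates of the $j-1$ copies of $v$, so $p$ is affine, and hence $f' := f\circ p : \Delta_{N'}\to\IR^d$ is affine whenever $f$ is.

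Next I would observe that the hypothesis $N+1 > \tfrac{r-1}{j-1}(d+1)$ gives $(N+1)(j-1) > (r-1)(d+1)$, and since both sides are integers this forces $N' = (N+1)(j-1)-1 \ge (r-1)(d+1)$. Thus the affine Tverberg theorem applies to $f'$ and produces pairwise disjoint faces $\sigma_1',\dots,\sigma_r'$ of $\Delta_{N'}$ with $f'(\sigma_1')\cap\dots\cap f'(\sigma_r')\neq\emptyset$. Putting $\sigma_i := p(\sigma_i')$, the faces $f(\sigma_1),\dots,f(\sigma_r)$ then have a common point, and a pigeonhole count finishes the argument: there are only $j-1$ copies of each vertex $v$ of $\Delta_N$, spread among the pairwise disjoint $\sigma_i'$, so at most $j-1$ of the $\sigma_i$ can contain $v$; as $v$ was arbitrary, any $j$ of the $\sigma_i$ have empty intersection, i.e.\ they are $j$-wise disjoint.

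I do not expect a genuine obstacle here: the entire content is the remark that the simplicial projection $\Delta_N^{*(j-1)}\to\Delta_N$ is affine, after which everything --- the dimension count, the $j$-wise disjointness, and the removal of the prime-power hypothesis --- is inherited from the affine Tverberg theorem exactly as in the proof of Theorem~\ref{thm:j-wise}.
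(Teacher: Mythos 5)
Your proof is correct and is essentially the paper's own argument: rerun the join-and-project proof of the $j$-wise disjoint topological Tverberg theorem, but note that the simplicial projection $p:\Delta_N^{*(j-1)}\to\Delta_N$ is affine, so $f\circ p$ is affine and Tverberg's original theorem applies without any prime-power hypothesis. The only difference is that you spell out the integer dimension count and the pigeonhole argument for $j$-wise disjointness, which the paper leaves implicit.
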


\begin{proof}
For affine maps $f : \Delta_N \to \IR^d$ we need not even assume that $r$ is a prime power---if we use
Tverberg's original theorem as the black box result. 
This is possible since the projection map
 $p : \Delta_N^{*(j-1)} \to \Delta_N$ 
as in the proof of Theorem \ref{thm:j-wise} is affine.
\end{proof}

\section{Tverberg partitions with equal barycentric coordinates}\label{sec:Soberon}
 
Let the vertices of $\Delta_N$ be partitioned into $\ell$ color classes $C_0, \dots, C_{\ell-1}$. Every point $x \in R$ in the 
corresponding rainbow complex $R$ has unique barycentric coordinates $x = \sum_{i=0}^{\ell-1} \alpha_iv_i$ with 
$0 \le \alpha_i \le 1$ and $v_i$ a vertex in the color class $C_i$ for $0\le i\le \ell-1$. 
We say that two points $x, y$ in the rainbow complex have \emph{equal barycentric coordinates} 
if $x = \sum_{i=0}^{\ell-1} \alpha_iv_i$ and $y = \sum_{i=0}^{\ell-1} \alpha_iw_i$, where $v_i$ and $w_i$ are vertices in the color class~$C_i$. 
The following theorem is a topological version of Sober\'on's \cite{soberon:equal_coefficients} 
``Tverberg's theorem with equal barycentric coordinates.''

\begin{theorem}[(Topological Tverberg with equal barycentric coordinates)] \label{theorem:equal_coeff}
Let $r\ge2$ be a prime power, $d \ge 1$, and $N = N_{(r-1)d} = r((r-1)d+1)-1$. Let $f : \Delta_N \to \IR^d$ be continuous. 
If the vertices of $\Delta_N$ are partitioned into $(r-1)d+1$ color classes of size $r$, 
then there are points $x_1, \dots, x_r$ with equal barycentric coordinates in $r$ pairwise disjoint rainbow faces 
$\sigma_1, \dots, \sigma_r$ of~$\Delta_N$ whose images intersect, with $f(x_1) = \dots = f(x_r)$.
\end{theorem}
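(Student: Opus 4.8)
The plan is to apply Key Lemma~\#1 (Lemma~\ref{tverberg_lemma}) with a carefully chosen constraint function~$g$ that records the barycentric coordinates of a point relative to the given color classes, so that equal fibers of~$g$ translate exactly into ``equal barycentric coordinates'' for Tverberg-coincident points. Write $\ell := (r-1)d+1$ for the number of color classes, each of size~$r$, so that $N+1 = r\ell$ and $N = N_c$ with $c := (r-1)d = \ell - 1$. First I would set up~$g$: for a point $x \in \Delta_N$ with barycentric coordinates $x = \sum_{v} \lambda_v(x)\, v$ (sum over all $N+1$ vertices), and color classes $C_0, \dots, C_{\ell-1}$, define $\mu_i(x) := \sum_{v \in C_i} \lambda_v(x)$ to be the total weight~$x$ places on color class~$C_i$. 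Then $(\mu_0(x), \dots, \mu_{\ell-1}(x))$ lies in the simplex $\sum_i \mu_i = 1$, so it is determined by its first $\ell - 1$ coordinates; set $g := (\mu_0, \dots, \mu_{\ell-2}) : \Delta_N \to \IR^{\ell-1} = \IR^c$. This $g$ is affine, hence continuous.

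Next I would invoke Lemma~\ref{tverberg_lemma} with this $f$ and $g$ and $c = \ell - 1 = (r-1)d$: since $N = N_c = (r-1)(d+1+c)$, the lemma produces pairwise disjoint faces $\sigma_1, \dots, \sigma_r$ and points $x_i \in \sigma_i$ with $f(x_1) = \dots = f(x_r)$ and $g(x_1) = \dots = g(x_r)$, i.e. $\mu_i(x_1) = \dots = \mu_i(x_r)$ for all $i = 0, \dots, \ell - 1$ (the last coordinate follows since the $\mu_i$ sum to~$1$). The remaining work is to upgrade ``$\sigma_1, \dots, \sigma_r$ pairwise disjoint'' to ``rainbow'' and to extract the equal-barycentric-coordinates conclusion. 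For the rainbow property I would use the pigeonhole/unavoidability idea: the $\sigma_i$ are $r$ pairwise disjoint faces using among them at most $N+1 = r\ell$ vertices, while each color class~$C_i$ has exactly~$r$ vertices; but this counting alone does not force each $\sigma_i$ to be rainbow. The cleaner route is to combine the constraint~$g$ with Lemma~\ref{example:unavoidable}(\ref{example:colored}): the rainbow complex $R$ is the intersection $\Sigma_0 \cap \dots \cap \Sigma_{\ell-1}$ of the subcomplexes $\Sigma_i$ of faces with at most one vertex in~$C_i$, each of which is Tverberg unavoidable since $|C_i| = r \le 2r - 1$; feeding the $\ell$ distance functions to these $\Sigma_i$ together with the $\ell - 1$ coordinate functions $\mu_0, \dots, \mu_{\ell-2}$ into a single constraint map would require $c' = (\ell - 1) + \ell = 2\ell - 1$ extra coordinates, which overshoots the available dimension $N = r\ell - 1$. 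So I expect the intended argument is more economical: observe that once $\mu_i(x_1) = \dots = \mu_i(x_r)$ holds, and recalling that each point $x_j$ may be assumed to lie in the relative interior of its (inclusion-minimal) face $\sigma_j$, the equality of the weight vectors $(\mu_0, \dots, \mu_{\ell-1})$ already constrains which color classes $\sigma_j$ touches --- and here the hypothesis that color classes have size exactly~$r$ (the minimum) should force each $\sigma_j$ to meet each color class in at most one vertex.

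Concretely, the key step is: given that the $r$ disjoint faces $\sigma_1, \dots, \sigma_r$ place the \emph{same} total weight $\mu_i$ on color class $C_i$ for every $i$, and $|C_i| = r$, I would argue by a double-counting/convexity argument that, after possibly re-choosing the points $x_j$ within their faces (or the faces within the unavoidable subcomplex), each $\sigma_j$ is forced to be rainbow. Once the faces are rainbow, each $x_j = \sum_{i} \mu_i(x_j)\, v_i^{(j)}$ with $v_i^{(j)}$ the unique vertex of $\sigma_j$ in~$C_i$ (or $\mu_i(x_j) = 0$ and no such vertex), and $\mu_i(x_j) = \mu_i(x_1) =: \alpha_i$ independent of~$j$; this is precisely the assertion that $x_1, \dots, x_r$ have equal barycentric coordinates in the sense defined before the theorem, with $f(x_1) = \dots = f(x_r)$ from the Tverberg coincidence. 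I expect \textbf{the main obstacle} to be exactly this last implication: showing that equality of the coarsened (color-class-aggregated) barycentric coordinates across $r$ disjoint faces, under the tight size condition $|C_i| = r$, actually forces each face to be rainbow. If that implication needs more room than $c = (r-1)d$ constraints allow, the fallback is to enlarge the constraint map to also encode partial rainbow-distance information and to check that the resulting $N$ still matches $N_{(r-1)d} = r((r-1)d+1) - 1$ --- which it does only if the extra distance constraints can be folded into the $\mu_i$'s rather than added on top, so verifying that folding is the crux.
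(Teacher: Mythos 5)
Your setup coincides with the paper's: the color-class weight functions (your $\mu_i$, the paper's $g_k$), the observation that only $c=(r-1)d$ of them are needed as constraints because they sum to $1$, and the application of Lemma~\ref{tverberg_lemma} at $N=N_{(r-1)d}$. But the step you yourself flag as ``the main obstacle'' --- upgrading the pairwise disjoint faces to \emph{rainbow} faces --- is left genuinely open, and the directions you sketch for closing it are not viable. Re-choosing the points $x_j$ inside their faces is not available: $f$ is only continuous, so moving $x_j$ destroys the coincidence $f(x_1)=\dots=f(x_r)$. And adding distance-to-$\Sigma_i$ constraints overshoots the available dimension, as you note. In fact no extra constraints and no double-counting argument are needed: the implication follows directly from the equalities you already have together with inclusion-minimality of the faces.

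Concretely, take each $\sigma_j$ inclusion-minimal, so that $x_j$ lies in its relative interior. If some $\sigma_j$ has a vertex in the color class $C_k$, then $\mu_k(x_j)>0$, since all barycentric coordinates of $x_j$ on the vertices of $\sigma_j$ are strictly positive; the equalities from Lemma~\ref{tverberg_lemma} give $\mu_k(x_i)>0$ for every $i$, hence \emph{every} $\sigma_i$ has at least one vertex in $C_k$. Since the $\sigma_i$ are pairwise disjoint and $\abs{C_k}=r$, each $\sigma_i$ then has exactly one vertex in $C_k$. Thus for each color class either all faces meet it in exactly one vertex or none meets it at all; in particular every $\sigma_i$ is rainbow, and the numbers $\mu_k(x_i)$, $0\le k\le (r-1)d$, are precisely the barycentric coordinates of $x_i$ in the sense of the theorem, equal for all $i$ by construction. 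This is exactly how the paper concludes; with this argument inserted, your plan closes completely and the constraint count $c=(r-1)d$ is unchanged.
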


\begin{proof}
  Let the color classes be $C_0, \dots, C_{(r-1)d}$.
  Every point $x \in \Delta_N$ is a unique convex combination $x = \sum \alpha_iv_i$ of the vertices of $\Delta_N$. 
  For $0\le k\le (r-1)d$ let $g_k : \Delta_N \to \IR$ be given by $\sum \alpha_iv_i \mapsto \sum_{v_i \in C_k} \alpha_i$. 
  Each $g_k$ is an affine function that is equal to~$1$ on the simplex $\conv(C_k)\subset\Delta_N$ with vertex set~$C_k$ 
  and $0$ on all other vertices of~$\Delta_N$. 
  
By Lemma \ref{tverberg_lemma} there are $x_1, \dots, x_r \in \Delta_N$ with $x_i \in \sigma_i$, 
where the $\sigma_i\subset\Delta_N$ are pairwise disjoint and $f(x_1) = \dots = f(x_r)$ as well as $g_k(x_1) = \dots = g_k(x_r)$ for 
$1\le k\le (r-1)d$; that is, the lemma does not guarantee equality for $g_0$. However, as 
$g_0+\dots +g_{(r-1)d}=1$ 
we also obtain $g_0(x_1) = \dots = g_0(x_r)$.
  
Suppose that for some $k$, the face $\sigma_j$ has at least one vertex in $C_k$. 
As we may again assume that $\sigma_j$ is the minimal face of $\Delta_N$ that contains $x_j$, 
this implies that $g_k(x_j) \neq 0$ and hence $g_k(x_i) \neq 0$ for $1\le i\le r$. 
Thus all $r$ faces $\sigma_i$ have at least one vertex in $C_k$. However, as $|C_k|=r$ and the $\sigma_i$ are pairwise disjoint, 
every $\sigma_i$ has exactly one vertex in $C_k$. Since this is true for every color, the $\sigma_i$ belong to the rainbow complex.
  
Thus the numbers $g_k(x_i)$ for $0\le k\le (r-1)d$ are exactly the barycentric coordinates of $x_i$. These are equal for all the $x_i$ since $g_k(x_1) = \dots = g_k(x_r)$ for all~$k$.
\end{proof}

Sober\'on in \cite[Section 4]{soberon:equal_coefficients} suggests an alternative idea for how to obtain the 
topological analogue of his result that we have obtained using our ansatz. 

The special case $r=2$ of Theorem \ref{theorem:equal_coeff},
which also establishes the B\'ar\'any--Larman conjecture \cite{barany1992colored} for $r=2$, 
is equivalent to the Borsuk--Ulam Theorem:

\begin{corollary} [(Borsuk--Ulam)]
For any continuous map $f : \partial\,\Diamond_{d+1} \to \IR^d$ from the boundary of the $(d+1)$-dimensional crosspolytope 
$\Diamond_{d+1}=\{x\in\IR^{d+1}:\sum_{i=1}^{d+1}|x_i|\le1\}$
to $\IR^d$, there are two points $x_1, x_2\in\Diamond_{d+1}$  
with $f(x_1) = f(x_2)$ that lie in opposite faces of~$\Diamond_{d+1}$ with equal barycentric coordinates, that is, $x_1 = -x_2$.
\end{corollary}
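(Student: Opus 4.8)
The plan is to recognize this corollary as the special case $r=2$ of Theorem~\ref{theorem:equal_coeff}. For $r=2$ we have $N=N_{(r-1)d}=r((r-1)d+1)-1=2d+1$, so the ambient simplex is $\Delta_{2d+1}$, with its $2d+2$ vertices partitioned into $d+1$ color classes $C_0,\dots,C_d$ of size~$2$. The first step is to identify the rainbow complex $R\subseteq\Delta_{2d+1}$ with $\partial\,\Diamond_{d+1}$. Write $C_i=\{v_i^+,v_i^-\}$ and send $v_i^\pm\mapsto\pm e_{i+1}\in\IR^{d+1}$, where $e_1,\dots,e_{d+1}$ are the standard basis vectors, that is, the $2d+2$ vertices of $\Diamond_{d+1}$ are $\pm e_1,\dots,\pm e_{d+1}$. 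Since $\partial\,\Diamond_{d+1}$ is combinatorially the join $(S^0)^{*(d+1)}$ of the antipodal pairs $\{e_{i+1},-e_{i+1}\}$, and $R$ is the join of the classes $C_i$, this extends to a simplicial, hence topological, identification $R\cong\partial\,\Diamond_{d+1}$: a point $x=(x_1,\dots,x_{d+1})$ with $\sum\abs{x_i}=1$ corresponds to the rainbow point $\sum_i\alpha_iu_i$, where $\alpha_i=\abs{x_{i+1}}$ and $u_i=v_i^+$ if $x_{i+1}\ge 0$ and $u_i=v_i^-$ otherwise, the choice being irrelevant when $x_{i+1}=0$.

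Next, I would extend the given map $f\colon\partial\,\Diamond_{d+1}=R\to\IR^d$ to a continuous map $\tilde f\colon\Delta_{2d+1}\to\IR^d$; this is possible since $R$ is a closed subcomplex of the simplex and $\IR^d$ is an absolute retract (equivalently, apply the Tietze extension theorem to each coordinate). Now apply Theorem~\ref{theorem:equal_coeff} to $\tilde f$: because $r=2$ is a prime power and the color classes have size $r=2$, there are points $x_1,x_2$ with equal barycentric coordinates, lying in two pairwise disjoint rainbow faces $\sigma_1,\sigma_2$ of $\Delta_{2d+1}$, with $\tilde f(x_1)=\tilde f(x_2)$. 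Since $x_1,x_2$ are rainbow points, they lie in $R$, where $\tilde f$ agrees with $f$, so $f(x_1)=f(x_2)$.

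It remains to check that ``$x_1,x_2$ have equal barycentric coordinates and lie in disjoint rainbow faces'' translates, under the identification above, into ``$x_1=-x_2$''. Write $x_1=\sum_i\alpha_iw_i$ and $x_2=\sum_i\alpha_iw_i'$ with $w_i,w_i'\in C_i$. For each color~$i$ with $\alpha_i\neq0$, the vertex $w_i$ belongs to the minimal face containing $x_1$, hence to $\sigma_1$, and likewise $w_i'\in\sigma_2$; since $\sigma_1\cap\sigma_2=\emptyset$ and $\abs{C_i}=2$ this forces $w_i\neq w_i'$, so $\{w_i,w_i'\}=C_i$ and therefore $w_i=-w_i'$ in $\IR^{d+1}$, whence $\alpha_iw_i=-\alpha_iw_i'$; when $\alpha_i=0$ both terms vanish. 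Summing over $i$ gives $x_1=-x_2$, and these are points in opposite faces of $\Diamond_{d+1}$ with equal barycentric coordinates, as claimed.

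I do not expect a genuine obstacle here: all the substance is carried by Theorem~\ref{theorem:equal_coeff}, and the only points requiring care are the coordinate dictionary between $\Diamond_{d+1}$ and the rainbow complex and the elementary observation that, for color classes of size~$2$, disjointness of two rainbow faces is exactly antipodality. One may append the easy converse --- that this statement implies the Borsuk--Ulam theorem --- to justify the word ``equivalent'' used above.
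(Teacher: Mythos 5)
Your proposal is correct and follows the same route as the paper: specialize Theorem~\ref{theorem:equal_coeff} to $r=2$, where the $d+1$ color classes of size $2$ make the rainbow complex exactly $\partial\,\Diamond_{d+1}$, so that equal barycentric coordinates in disjoint rainbow faces means $x_1=-x_2$. The paper states this in two sentences, leaving implicit the details you spell out (the Tietze extension of $f$ to $\Delta_{2d+1}$ and the verification that disjointness of the two rainbow faces forces antipodality), and those details are handled correctly in your write-up.
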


\begin{proof}
 For $r = 2$, Theorem~\ref{theorem:equal_coeff} treats a simplex with $2d+2$ vertices in $d+1$ color classes of cardinality~$2$. 
 Thus here the rainbow complex is exactly the boundary of the $(d+1)$-dimensional crosspolytope.
\end{proof}

We also obtain Sober\'on's original result in the same way.

\begin{theorem}[(Tverberg with equal barycentric coordinates: Sober\'on \cite{soberon:equal_coefficients})] \label{theorem:equal_coeff_affine}
Let $r \ge 2$, $d \ge 1$, and $N = N_{(r-1)d} = r((r-1)d+1)-1$. Let $f : \Delta_N \to \IR^d$ be affine. 
If the vertices of $\Delta_N$ are partitioned into $(r-1)d+1$ color classes of size $r$, 
then there are points $x_1, \dots, x_r$ with equal barycentric coordinates in $r$ pairwise disjoint rainbow faces 
$\sigma_1, \dots, \sigma_r$ of~$\Delta_N$ whose images intersect, with $f(x_1) = \dots = f(x_r)$.
\end{theorem}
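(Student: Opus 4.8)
The plan is to simply re-run the proof of Theorem~\ref{theorem:equal_coeff} word for word, with the single modification that in the one place where that argument invokes Lemma~\ref{tverberg_lemma} — and hence needs $r$ to be a prime power — we instead invoke its affine variant recorded in Remark~\ref{remark:affine}, which is valid for every integer $r\ge2$ provided both the map $f$ and the constraint function $g$ are affine. So the whole task reduces to checking that, in the situation at hand, the constraint function built in the proof of Theorem~\ref{theorem:equal_coeff} is affine.

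Concretely: set $c:=(r-1)d$, so that $N=N_{(r-1)d}=N_c$ as in the statement. Write each $x\in\Delta_N$ uniquely as $x=\sum\alpha_iv_i$, and for $0\le k\le (r-1)d$ let $g_k:\Delta_N\to\IR$ be $\sum\alpha_iv_i\mapsto\sum_{v_i\in C_k}\alpha_i$, exactly as before. Each $g_k$ is a sum of barycentric-coordinate functions and is therefore an affine function on $\Delta_N$ (it equals $1$ on $\conv(C_k)$ and $0$ on the remaining vertices). Since $f$ is affine by hypothesis, the map $x\mapsto\big(f(x),g_1(x),\dots,g_{(r-1)d}(x)\big):\Delta_N\to\IR^{d+(r-1)d}$ is affine. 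Hence Remark~\ref{remark:affine} applies and yields pairwise disjoint faces $\sigma_1,\dots,\sigma_r$ of $\Delta_N$ and points $x_i\in\sigma_i$ with $f(x_1)=\dots=f(x_r)$ and $g_k(x_1)=\dots=g_k(x_r)$ for $1\le k\le(r-1)d$.

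From this point on the argument is purely combinatorial and carries over verbatim: the relation $g_0+\dots+g_{(r-1)d}=1$ forces $g_0(x_1)=\dots=g_0(x_r)$ as well; choosing the $\sigma_i$ inclusion-minimal, a vertex of $\sigma_j$ in $C_k$ would make $g_k(x_j)\neq0$, hence $g_k(x_i)\neq0$ for all $i$, so every $\sigma_i$ meets $C_k$, and since $|C_k|=r$ and the $\sigma_i$ are disjoint, each $\sigma_i$ has exactly one vertex in $C_k$; running this over all colors shows the $\sigma_i$ are rainbow faces, and then the common values $g_k(x_i)$ are precisely the (shared) barycentric coordinates of the $x_i$.

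I do not expect a real obstacle here; the proof is a direct specialization, and the only point requiring attention is exactly the one isolated above — verifying that the constraints $g_k$ are affine so that the affine black box of Remark~\ref{remark:affine} is applicable. Granting that, the conclusion holds for all $r\ge2$, removing the prime-power hypothesis present in Theorem~\ref{theorem:equal_coeff}.
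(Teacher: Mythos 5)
Your proposal is correct and is exactly the paper's argument: the paper also proves this by repeating the proof of Theorem~\ref{theorem:equal_coeff}, observing that the constraint functions $g_k$ are affine, and invoking Remark~\ref{remark:affine} in place of Lemma~\ref{tverberg_lemma} so that no prime-power hypothesis on $r$ is needed. Your write-up just spells out the details that the paper leaves implicit.
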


\begin{proof}
The proof is the same as for Theorem \ref{theorem:equal_coeff}. 
Here the constraint functions are affine, so the proof follows by Remark~\ref{remark:affine}.
\end{proof}

Sober\'on shows that the number of color classes and the number of points per color class are optimal for the theorem to hold.
Thus the topological version, Theorem~\ref{theorem:equal_coeff}, is also optimal in that sense.

\section{Optimal colored versions of the topological Tverberg theorem}\label{sec:optimal}

The following theorem is a strengthening of the topological Tverberg theorem \ref{thm:top_tverberg_thm}
in the case when $r$ is a prime.
 
\begin{theorem}[(Optimal colored Tverberg: Blagojevi\'c, Matschke \& Ziegler \cite{blagojevic2009optimal})] \label{theorem:optimal}
Let $r\ge2$ be a prime, $d\ge1$, and $N\geq N_0=(r-1)(d+1)$. 
Let the vertices of $\Delta_N$ be colored by $m+1$ colors $C_{0}, \dots, C_m$ with $|C_i|\le r-1$ for all $i$. 
Then for every continuous map $f : \Delta_N \to \IR^d$ there are $r$ pairwise disjoint rainbow faces $\sigma_1, \dots, \sigma_r$ of~$\Delta_N$, such that $f(\sigma_1) \cap \dots \cap f(\sigma_r) \neq \emptyset$. 
\end{theorem}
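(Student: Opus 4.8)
The plan is to deduce Theorem~\ref{theorem:optimal} from the optimal colored Tverberg theorem of Blagojevi\'c, Matschke \& Ziegler in its original ``type A'' formulation (equal color classes of size $r-1$), much as Theorem~\ref{theorem:colored_tverberg} was deduced from the \v{Z}ivaljevi\'c--Vre\'cica theorem. Wait---the statement as written \emph{is} the optimal colored Tverberg theorem, cited to \cite{blagojevic2009optimal}, so the ``proof'' here should simply record that this is the result we take as a black box, possibly after a small normalization argument to pass between formulations. Concretely, the original optimal colored Tverberg theorem is usually stated for a simplex on exactly $N_0+1=(r-1)(d+1)+1$ vertices partitioned into $d+2$ color classes each of size $r-1$ (except one of size $1$), or into $\le (r-1)(d+1)+1$ classes of size $\le r-1$; the task is to reconcile that with the present formulation allowing $N\ge N_0$ and $m+1$ colors of size $\le r-1$ with no constraint forcing $m+1$ or the total vertex count.

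First I would observe that the hypotheses force $N+1\le (m+1)(r-1)$, so $m+1\ge \lceil (N+1)/(r-1)\rceil \ge d+2$ whenever $N\ge N_0$; in particular there are enough colors that the rainbow complex is nonempty and indeed large. Next I would reduce to the case $N=N_0$: if $N>N_0$, apply Lemma~\ref{example:unavoidable}(\ref{example:original_simplex}) to pass to an induced subsimplex $\Delta_{N-(r-1)}$, but one must check that the restriction of the coloring to this subsimplex still has all color classes of size $\le r-1$---which is automatic, since deleting vertices only shrinks color classes---and iterate down to $N=N_0$. (A subtlety: after deleting $r-1$ vertices the number of nonempty color classes might drop below $d+2$; but the count $N+1\le(m'+1)(r-1)$ is preserved with $N+1=N_0+1$, so there are still $\ge d+2$ nonempty classes.)

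With $N=N_0$, I would then invoke the black-box optimal colored Tverberg theorem of \cite{blagojevic2009optimal} directly. If one wants to be fully self-contained about the passage between its published form and the present one, the key point is that the BMZ result holds for \emph{any} partition of the $N_0+1$ vertices into classes of size at most $r-1$ (merging or splitting classes of the ``small'' kind does not affect the rainbow constraint, since a rainbow face has at most one vertex per class and the obstruction-theoretic input---high connectivity of the relevant chessboard-type configuration space and the computation with the degree of the equivariant map when $r$ is prime---depends only on the multiset of class sizes being bounded by $r-1$). So the proof is one line: \emph{this is Theorem~1.3 (equivalently, the ``type A'' colored Tverberg theorem) of \cite{blagojevic2009optimal}, restated; the reduction to $N=N_0$ is by Lemma~\ref{example:unavoidable}(\ref{example:original_simplex}) together with Lemma~\ref{lemma:unavoidable}.}

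The main obstacle is not a hard argument but a bookkeeping one: making sure the reduction $N>N_0 \rightsquigarrow N=N_0$ is legitimate in the colored setting, i.e.\ that the induced subcomplex on $N-r+2$ vertices that Lemma~\ref{lemma:unavoidable} hands us still carries a coloring meeting the hypothesis $|C_i|\le r-1$, and that a Tverberg partition found inside it (rainbow with respect to the restricted coloring) is still rainbow with respect to the original coloring---which it is, since the color classes of the subsimplex are subsets of the original ones. I would flag that the ``$r$ prime'' hypothesis enters only through the black box and nowhere in the reduction, so any extension of the optimal colored Tverberg theorem to prime powers (or beyond) would upgrade Theorem~\ref{theorem:optimal} verbatim; and I would note, as the paper does for its other results, that for affine $f$ the reduction step is affine and costs no prime(-power) hypothesis on its own, though of course the black box still does.
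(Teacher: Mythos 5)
You have correctly identified the situation: the paper contains no proof of Theorem~\ref{theorem:optimal} at all --- it is imported (up to allowing $N\ge N_0$ instead of $N=N_0$) from \cite{blagojevic2009optimal} and used as an alternative black box in Section~\ref{sec:optimal}, exactly as you say. So ``cite Blagojevi\'c--Matschke--Ziegler'' is indeed the intended proof, and your remark that the primality hypothesis enters only through this black box matches how the paper uses the result.

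One concrete correction to your optional normalization step: the reduction from $N>N_0$ to $N=N_0$ cannot be run through Lemma~\ref{example:unavoidable}(\ref{example:original_simplex}) together with Lemma~\ref{lemma:unavoidable}. First, Lemma~\ref{lemma:unavoidable} requires $N\ge N_1=(r-1)(d+2)$, so it says nothing for $N_0\le N<N_1$; second, and more importantly, its conclusion is produced from the \emph{uncolored} topological Tverberg theorem via Lemma~\ref{tverberg_lemma}, so the Tverberg partition it places inside $\Delta_{N-(r-1)}$ need not be rainbow --- iterating it would lose the coloring constraint entirely. The correct reduction needs none of the machinery: choose a set $V'$ of $N_0+1$ vertices of $\Delta_N$, restrict $f$ to the face spanned by $V'$ and restrict the coloring (classes only shrink, so $|C_i\cap V'|\le r-1$), apply the $N=N_0$ case of \cite{blagojevic2009optimal}, and observe that a rainbow face of the subsimplex is a rainbow face of $\Delta_N$ for the original coloring. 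Also treat your parenthetical about merging or splitting small color classes with care: splitting classes weakens the rainbow condition while merging strengthens it, so this is not a symmetric bookkeeping fact --- though it is not needed once the restriction argument above is in place.
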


As mentioned in the introduction, this theorem can as well be used as a black box result, to which the method
provided in Sections~\ref{sec:constraining_function} and \ref{sec:T-unavoidable-subcomplexes}
can be applied. In this section we will give two examples.

\begin{theorem}\label{theorem:optimal2}
Let $r\ge2$ be a prime, $d\ge1$, $\ell\ge0$, and $k\ge0$.
Let the vertices of $\Delta_N$ be colored by $\ell+k$ colors $C_{0}, \dots, C_{\ell+k-1}$ with 
$|C_0|\leq r-1$, \ldots, $|C_{\ell-1}|\leq r-1$ and \mbox{$|C_{\ell}|\geq 2r-1$}, \ldots, $|C_{\ell+k-1}|\geq 2r-1$,
where 
$|C_0|+\dots+ |C_{\ell-1}| > (r-1)(d-k+1)-k$. 
Then for every continuous map $f:\Delta_N\rightarrow\IR^d$ there are $r$ pairwise disjoint rainbow faces $\sigma_1, \dots, \sigma_r$ of~$\Delta_N$ such that $f(\sigma_1) \cap \dots \cap f(\sigma_r) \neq \emptyset$.
\end{theorem}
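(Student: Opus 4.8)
The plan is to combine the optimal colored Tverberg theorem (Theorem~\ref{theorem:optimal}) as the black box with the unavoidable-subcomplex machinery of Sections~\ref{sec:constraining_function} and~\ref{sec:T-unavoidable-subcomplexes}, exactly as Theorem~\ref{theorem:colored_tverberg} was derived from Theorem~\ref{thm:top_tverberg_thm}. First I would record that the hypothesis $|C_0|+\dots+|C_{\ell-1}|>(r-1)(d-k+1)-k$ is the counting condition that lets the ``small'' color classes $C_0,\dots,C_{\ell-1}$ play the role of the colors in the optimal colored Tverberg theorem after we have spent $k$ additional constraints on the ``large'' color classes $C_\ell,\dots,C_{\ell+k-1}$. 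Concretely, for each large class $C_{\ell+i}$ ($0\le i\le k-1$) the subcomplex $\Sigma_i$ of faces having at most one vertex in $C_{\ell+i}$ is Tverberg unavoidable by Lemma~\ref{example:unavoidable}(\ref{example:colored}), since $|C_{\ell+i}|\ge 2r-1$ is irrelevant to unavoidability — what matters is that the relevant statement only needs ``at most $2r-1$ vertices'' in a set, and here we may simply pick any $2r-1$ of them, or rather: unavoidability of ``at most one vertex in $S$'' holds for \emph{every} set $S$ once we restrict attention to faces of a Tverberg partition, because if all $\sigma_j$ had two vertices of color $\ell+i$ the partition would already use $2r$ vertices of that color; but each $\sigma_j$ is a face of $\Delta_N$, so this is automatic. (I would state this cleanly: the complex of faces with at most one vertex of a fixed color is always Tverberg unavoidable, with no size restriction on the color class.)

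Next I would set up the constraint functions. Let $g_i\colon\Delta_N\to\IR$ be the distance to $\Sigma_i$ for $i=0,\dots,k-1$, and let $g\colon\Delta_N\to\IR^k$ be $(g_0,\dots,g_{k-1})$. The key point is that we do not apply Lemma~\ref{tverberg_lemma} directly (that would force $N\ge(r-1)(d+1+k)$, more than we want); instead we apply the \emph{optimal colored Tverberg theorem} to the map $(f,g)\colon\Delta_N\to\IR^{d+k}$, using the colors $C_0,\dots,C_{\ell-1}$ — each of size $\le r-1$ — as a valid coloring for that theorem in dimension $d+k$. For this I need the coloring to be admissible there, i.e.\ I need enough points/colors: the optimal colored Tverberg theorem in dimension $d+k$ needs $N\ge (r-1)(d+k+1)$ and a coloring by colors of size $\le r-1$; the colors $C_0,\dots,C_{\ell-1}$ account for $|C_0|+\dots+|C_{\ell-1}|$ of the vertices, and the remaining $N+1-(|C_0|+\dots+|C_{\ell-1}|)$ vertices (those in the large classes) can simply be split into singleton color classes. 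Admissibility then amounts precisely to $|C_0|+\dots+|C_{\ell-1}|+ \text{(number of remaining vertices as singletons)}$ covering $N+1$ with all classes $\le r-1$, which is automatic, plus the dimension count $N\ge (r-1)(d+k+1)$; and the hypothesis $|C_0|+\dots+|C_{\ell-1}|>(r-1)(d-k+1)-k$ is exactly what guarantees $N+1>(r-1)(d+1)$ suffices, i.e.\ this is where the stated inequality is consumed. I would verify the arithmetic: $(r-1)(d+k+1)\le N$ together with the singleton-splitting is equivalent to the displayed hypothesis.

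Having obtained from the optimal colored Tverberg theorem a Tverberg partition $\sigma_1,\dots,\sigma_r$ for $(f,g)$ that is rainbow with respect to the colors $C_0,\dots,C_{\ell-1}$ — in particular each $\sigma_j$ has at most one vertex in each of $C_0,\dots,C_{\ell-1}$ — I would finish exactly as in Lemma~\ref{lemma:unavoidable}: choosing points $x_j$ in the relative interiors of inclusion-minimal faces $\sigma_j$ with $f(x_1)=\dots=f(x_r)$ and $g(x_1)=\dots=g(x_r)$, unavoidability of each $\Sigma_i$ forces some $\sigma_{j(i)}\subseteq\Sigma_i$, hence $g_i$ vanishes there, hence $g_i$ vanishes on all the $x_j$, hence (by minimality of the faces) all $\sigma_j\subseteq\Sigma_i$, i.e.\ each $\sigma_j$ has at most one vertex of each large color $C_{\ell+i}$. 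Combined with the rainbow property for the small colors, all $\sigma_j$ are rainbow faces, and $f(\sigma_1)\cap\dots\cap f(\sigma_r)\neq\emptyset$. The only genuine obstacle is the bookkeeping in the previous paragraph — making sure the constant ``$-k$'' and the shift ``$d-k+1$'' in the hypothesis line up precisely with the requirement $N\ge(r-1)(d+k+1)$ once the large classes are broken into singletons — but this is a finite linear computation with no conceptual content.
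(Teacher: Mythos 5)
Your parenthetical ``clean statement''---that the complex of faces with at most one vertex of a fixed color is Tverberg unavoidable with \emph{no} size restriction on the color class---is false, and it is exactly where your proof breaks. The pigeonhole argument behind Lemma~\ref{example:unavoidable}(\ref{example:colored}) only yields a contradiction when $\abs{S}\le 2r-1$: if all $r$ faces of a partition had two vertices in $S$, this merely forces $\abs{S}\ge 2r$, which is no contradiction once $\abs{C_{\ell+i}}\ge 2r$ (allowed by the hypothesis $\abs{C_{\ell+i}}\ge 2r-1$). For instance, for $r=2$, $d=1$ and a class $S$ of four points on a line one easily produces an affine map with a Tverberg partition into two overlapping segments, each using two points of~$S$; the complex of faces with at most one vertex in $S$ then misses \emph{every} face of that partition, so it is not unavoidable. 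Your fallback ``pick any $2r-1$ of them'' does not repair this: unavoidability of the complex of faces with at most one vertex in a chosen subset $S_i\subseteq C_{\ell+i}$ of size $2r-1$ only forces the final faces to have at most one vertex \emph{in $S_i$}; they may still contain several vertices of $C_{\ell+i}\setminus S_i$ and hence fail to be rainbow for the original coloring. So the concluding step ``unavoidability of each $\Sigma_i$ forces some $\sigma_{j(i)}\subseteq\Sigma_i$'' is unjustified whenever some large class has more than $2r-1$ vertices.

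The missing idea---and what the paper does first---is a reduction: delete vertices so that $\abs{C_\ell}=\dots=\abs{C_{\ell+k-1}}=2r-1$ exactly, i.e.\ restrict $f$ to the subsimplex spanned by the remaining $N'+1=\abs{C_0}+\dots+\abs{C_{\ell-1}}+k(2r-1)$ vertices; a rainbow Tverberg partition there is also one in $\Delta_N$. The displayed hypothesis then gives precisely $N'\ge(r-1)(d+k+1)=N_k$, Lemma~\ref{example:unavoidable}(\ref{example:colored}) now applies to each (exactly $(2r-1)$-element) large class, and the rest of your argument---apply Theorem~\ref{theorem:optimal} to $(f,g)\colon\Delta_{N'}\to\IR^{d+k}$ with the small classes kept and the large classes split into singletons (or sub-classes of size at most $r-1$), then run the minimal-face/distance argument of Lemma~\ref{lemma:unavoidable} and Theorem~\ref{theorem:multiple_unavoidable}---is exactly the paper's proof. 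A minor side remark: your reason for avoiding Lemma~\ref{tverberg_lemma} is off, since it would require the same bound $N\ge(r-1)(d+1+k)$ that you need anyway; the real reason to use the colored theorem as the black box is to enforce the rainbow condition on the small classes, which the uncolored Tverberg theorem cannot give.
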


\begin{proof}
Without loss of generality we can assume that $|C_{\ell}|=\dots=|C_{\ell+k-1}|=2r-1$, by deleting any additional vertices.
Then the simplex $\Delta_N$ has still $N+1=|C_0|+\dots+ |C_{\ell-1}| +k(2r-1)$ vertices, so
\[
N = |C_0|+\dots+ |C_{\ell-1}| +k(2r-1)-1 \ge (r-1)(d+k+1)=N_k.
\]
Now we split each of the color classes 
$C_{\ell},\dots,C_{\ell+k-1}$ into new color sub-classes of cardinality at most $r-1$.
(For example, singletons will do.) 
Let $\Sigma_i$ be the subcomplex of all faces of $\Delta_N$
with at most one vertex in $C_i$.
Thus Theorem~\ref{theorem:optimal} together with the proof technique of Theorem~\ref{theorem:multiple_unavoidable}
yields that there is a Tverberg $r$-partition $\sigma_1,\dots,\sigma_r$
where each of the simplices $\sigma_i$ is a rainbow simplex with respect to the refined coloring
where the large color classes have been split into sub-classes, and it also 
lies in $\Sigma_{\ell}\cap\dots\cap \Sigma_{\ell+k-1}$, that is, it
uses at most one of the color sub-classes of each of $C_{\ell},\dots,C_{\ell+k-1}$ and thus
respects the original coloring. 
\end{proof}

This Theorem~\ref{theorem:optimal2} contains Theorem~\ref{theorem:optimal} as the special case $k=0$, 
and also 
Vre\'cica \& \v{Z}ivaljevi\'c's \cite[Prop.~5]{vrecica_zivaljevic:chessboard_complexes}
as the special case $|C_0|=\dots=|C_{\ell-1}|=r-1$ and $|C_{\ell}|=\dots=|C_{\ell+k-1}|=2r-1$.
If we further specialize to $r=2$ and $k=1$, this in turn reduces to the ``colored Radon''
Corollary~\ref{cor:colored_Radon}, as noted in \cite[Cor.~7]{vrecica_zivaljevic:chessboard_complexes}.
For $\ell=0$ we get Vre\'cica \& \v{Z}ivaljevi\'c's colored Tverberg theorem ``of type B,'' 
see \cite{vrecica1994new} and \cite[Cor.~8]{vrecica_zivaljevic:chessboard_complexes}.
\medskip

As a second instance of combining Theorem~\ref{theorem:optimal} with the proof technique of Theorem~\ref{theorem:multiple_unavoidable},
we finally obtain from our method the following new result about colored Tverberg partitions with restricted dimensions.
 
\begin{theorem}
Let $r\ge2$ be a prime, $d \ge 1$, $N\ge N_1 = (r-1)(d+2)$, and $k \ge \lceil\frac{r-1}{r}d\rceil$. 
Let the vertices of $\Delta_N$ be colored by $m+1$ colors $C_{0}, \dots, C_m$ with $|C_i|\le r-1$ for all $i$. 
Then for every continuous map $f : \Delta_N \to \IR^d$ there are $r$ pairwise disjoint rainbow faces $\sigma_1, \dots, \sigma_r$ 
of~$\Delta_N$ with $\dim\sigma_i \le k$ for $1\le i\le r$, such that $f(\sigma_1) \cap \dots \cap f(\sigma_r) \neq \emptyset$.
\end{theorem}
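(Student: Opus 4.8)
The plan is to re-run the proof of Theorem~\ref{theorem:bounded_dimensions} (its uncoloured special case) with the optimal colored Tverberg theorem \ref{theorem:optimal} used as the ``black box'' in place of the plain topological Tverberg theorem \ref{thm:top_tverberg_thm}. The key point is that Theorem~\ref{theorem:optimal} already produces \emph{rainbow} faces, so the colour constraint is inherited for free; the distance-to-the-skeleton trick from Lemma~\ref{lemma:unavoidable} then upgrades ``one face of small dimension'' to ``all faces of small dimension'' exactly as in the uncoloured argument.

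First I would reduce to $N=N_1=(r-1)(d+2)$. If $N>N_1$, restrict $f$ to the sub-simplex $\Delta_{N_1}\subseteq\Delta_N$ spanned by any $N_1+1$ of the vertices; the induced colour classes still have size $\le r-1$, a rainbow face of $\Delta_{N_1}$ is a rainbow face of $\Delta_N$, and disjointness and the bound $\dim\sigma_i\le k$ are inherited. This reduction is genuinely needed, since for $N$ much larger than $N_1$ the $k$-skeleton of $\Delta_N$ is no longer Tverberg unavoidable (Lemma~\ref{example:unavoidable}(\ref{example:bounded_dimensions}) requires $r(k+2)>N+1$).

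Now assume $N=N_1$, and let $g\colon\Delta_{N_1}\to\IR$ send $x$ to its distance to the $k$-skeleton $\Delta_{N_1}^{(k)}$, so that $g$ is continuous and $g(x)=0$ iff $x\in\Delta_{N_1}^{(k)}$. Apply Theorem~\ref{theorem:optimal} to the continuous map $F=(f,g)\colon\Delta_{N_1}\to\IR^{d+1}$: this is legitimate because $r$ is prime, $N_1=(r-1)((d+1)+1)$, and the vertices of $\Delta_{N_1}$ are coloured by classes of size $\le r-1$. This yields $r$ pairwise disjoint rainbow faces $\sigma_1,\dots,\sigma_r$ with $F(\sigma_1)\cap\dots\cap F(\sigma_r)\neq\emptyset$, i.e.\ points $x_i\in\sigma_i$ with $f(x_1)=\dots=f(x_r)$ and $g(x_1)=\dots=g(x_r)$; as in Lemma~\ref{lemma:unavoidable} I take each $\sigma_i$ to be the inclusion-minimal face containing $x_i$ in its relative interior (still rainbow, since a face of a rainbow face is rainbow). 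The hypothesis $k\ge\lceil\frac{r-1}{r}d\rceil$ is equivalent to $r(k+2)>N_1+1$, so by the counting in Lemma~\ref{example:unavoidable}(\ref{example:bounded_dimensions}) the $r$ disjoint faces $\sigma_i$ cannot all have dimension $\ge k+1$; hence some $\sigma_j\subseteq\Delta_{N_1}^{(k)}$ and $g(x_j)=0$, so $g(x_i)=0$ for all $i$, so every $x_i\in\Delta_{N_1}^{(k)}$. Since $\sigma_i$ is inclusion-minimal and $g$ vanishes at a point of the relative interior of a face iff it vanishes on all of it, every $\sigma_i\subseteq\Delta_{N_1}^{(k)}$, i.e.\ $\dim\sigma_i\le k$; and $f(x_1)=\dots=f(x_r)$ gives $f(\sigma_1)\cap\dots\cap f(\sigma_r)\neq\emptyset$, which is the claim.

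\textbf{Main obstacle.} There is no deep difficulty: the content is bookkeeping about which hypotheses survive when the black box is swapped. The two points that must be handled carefully are (a) that Theorem~\ref{theorem:optimal} still applies after enlarging the target from $\IR^d$ to $\IR^{d+1}$ --- this is precisely why the correct hypothesis is $N\ge N_1$ rather than $N\ge N_0$ --- and (b) not skipping the reduction to $N=N_1$, without which $\Delta_N^{(k)}$ need not be Tverberg unavoidable and the argument breaks down.
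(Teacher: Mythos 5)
Your proof is correct and follows exactly the route the paper intends for this result (which it states without a written-out proof): use the optimal colored Tverberg theorem as the black box for the map $(f,g)$ with $g$ the distance to the $k$-skeleton, note that $k\ge\lceil\tfrac{r-1}{r}d\rceil$ makes $\Delta_{N_1}^{(k)}$ Tverberg unavoidable, and conclude via the inclusion-minimal-face argument of Lemma~\ref{lemma:unavoidable}, after the standard restriction to $N=N_1$. Your bookkeeping of the hypotheses (prime $r$, color classes of size $\le r-1$ surviving the restriction, and $N_1=(r-1)((d+1)+1)$ matching the enlarged target $\IR^{d+1}$) is accurate.
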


\providecommand{\noopsort}[1]{}
\providecommand{\bysame}{\leavevmode\hbox to3em{\hrulefill}\thinspace}

\begin{acknowledgements}\label{ackref}
    We are grateful to Moritz Firsching and Albert Haase for very valuable discussions and observations.
    Thanks to a referee for excellent comments and recommendations.
\end{acknowledgements}

\affiliationone{Pavle V. M. Blagojevi\'c\\
   Inst.\ Mathematics, FU Berlin\\ Arnimallee 2, 14195 Berlin\\ Germany 
   \\[1mm]
   and\\[1mm]
   Mathemati\v cki Institut SANU\\ Knez Mihailova 36, 11001 Beograd\\ Serbia\\
   \email{blagojevic@math.fu-berlin.de\\pavleb@mi.sanu.ac.rs}}
\affiliationtwo{Florian Frick\\Inst. Mathematics, MA 8-1, TU Berlin\\ Str. des 17. Juni 136, 10623 Berlin\\ Germany\\
    \email{frick@math.tu-berlin.de}}
\affiliationthree{G\"unter M. Ziegler\\
       Inst.\ Mathematics, FU Berlin\\ Arnimallee 2, 14195 Berlin\\ Germany\\  
       \email{ziegler@math.fu-berlin.de}}
\end{document}